\newcommand{\subscript}[2]{$#1 _ #2$}
\newtheorem{theorem}{Theorem}[section]
\newtheorem{lemma}[theorem]{Lemma}
\newtheorem{proposition}[theorem]{Proposition}
\newtheorem{definition}[theorem]{Definition}
\newcommand\eps{\varepsilon}
\newcommand{\E}{\mathbb E}
\newcommand{\Nn}{{\mathbb N}}
\newcommand{\scr}{\mathcal}
\newcommand{\mb}{\mathbb}
\newcommand{\til}{\widetilde}
\newcommand{\ex}{\mathrm{ex}}
\def\R{{\mathcal R}}
\def\ex{{\mathbb E}}
\title{Perfect Matchings in the Semi-random Graph Process}
\author{Pu Gao}
\address{Department of Combinatorics and Optimization, University of Waterloo, Waterloo, Canada}
\email{pu.gao@uwaterloo.ca}
\author{Calum MacRury}
\address{Department of Computer Science, University of Toronto, Toronto, Canada}
\email{cmacrury@cs.toronto.edu}
\author{Pawe\l{} Pra\l{}at}
\address{Department of Mathematics, Ryerson University, Toronto, Canada}
\email{pralat@ryerson.ca}
\date{}
\begin{document}

\maketitle

\begin{abstract}
The semi-random graph process is a single player game in which the player is initially presented an empty graph on $n$ vertices. In each round, a vertex $u$ is presented to the player independently and uniformly at random. The player then adaptively selects a vertex $v$, and adds the edge $uv$ to the graph. For a fixed monotone graph property, the objective of the player is to force the graph to satisfy this property with high probability in as few rounds as possible.

We focus on the problem of constructing a perfect matching in as few rounds as possible. In particular, we present an adaptive strategy for the player which achieves a perfect matching in $\beta n$ rounds, where the value of $\beta < 1.206$ is derived from a solution to some system of differential equations. This improves upon the previously best known upper bound of $(1+2/e+o(1)) \, n < 1.736 \, n$ rounds. We also improve the previously best lower bound of $(\ln 2 + o(1)) \, n > 0.693 \, n$ and show that the player cannot achieve the desired property in less than $\alpha n$ rounds, where the value of $\alpha > 0.932$ is derived from a solution to another system of differential equations. As a result, the gap between the upper and lower bounds is decreased roughly four times.
\end{abstract}

\section{Introduction and Main Results}

\subsection{Definitions} 

In this paper, we consider the \textbf{semi-random process} suggested by Peleg Michaeli and studied recently in \cite{beneliezer2019semirandom,beneliezer2020fast,gao2020hamilton} that can be viewed as a ``one player game''. The process starts from $G_0$, the empty graph on the vertex set $[n]:=\{1,\ldots,n\}$ where $n \ge 1$. In each step $t$, a vertex $u_t$ is chosen uniformly at random from $[n]$. Then, the player (who is aware of graph $G_t$ and vertex $u_t$) must select a vertex $v_t$ and add the edge $u_tv_t$ to $G_t$ to form $G_{t+1}$. The goal of the player is to build a (multi)graph satisfying a given property $\scr{P}$ as quickly as possible. It is convenient to refer to $u_t$ as a {\bf square}, and $v_t$ as a {\bf circle} so every edge in $G_t$ joins a square with a circle. We say that vertex $j \in [n]$ is \textbf{covered} by the square $u_t$ arriving at round $t$,
provided $u_t = j$. The analogous definition extends to the circle $v_t$. Equivalently, we may view $G_t$ as a directed graph where each arc  directs from $u_t$ to $v_t$. For this paper, it is easier to consider squares and circles for counting arguments.

A \textbf{strategy} $\scr{S}$ is defined by specifying for each $n \ge 1$, a sequence of functions $(f_{t})_{t=1}^{\infty}$, where for each $t \in \mb{N}$, $f_t(u_1,v_1,\ldots, u_{t-1},v_{t-1},u_t)$ is a distribution over $[n]$
which depends on the vertex $u_t$, and the history of the process up until step $t-1$. Then, $v_t$ is chosen according to this distribution. If $f_t$ is an atomic distribution, then $v_t$ is determined by $u_1,v_1, \ldots ,u_{t-1},v_{t-1},u_t$. Observe that this means that the player needs to select her strategy in advance, before the game actually starts. We then denote $(G_{t}^{\scr{S}}(n))_{i=0}^{t}$ as
the sequence of random (multi)graphs obtained by following the strategy $\scr{S}$ for $t$ rounds; where we shorten $G_{t}^{\scr{S}}(n)$
to $G_t$ or $G_{t}(n)$ when clear.

Suppose $\scr{P}$ is a monotonely increasing property. Given a strategy $\scr{S}$ and a constant $0<q<1$, let $\tau_{\scr{P}}(\scr{S},q,n)$ be the minimum $t \ge 0$ for which $\mb{P}[G_{t} \in \scr{P}] \ge q$,
where $\tau_{\scr{P}}(\scr{S},q,n):= \infty$ if no such $t$ exists. Define
\[
\tau_{\scr{P}}(q,n) = \inf_{ \scr{S}} \tau_{\scr{P}}( \scr{S},q,n),
\]
where the infimum is over all strategies on $[n]$. 
Observe that for each $n \ge 1$, if $0 \le q_{1} \le q_{2} \le 1$, then $\tau_{\scr{P}}(q_1,n) \le \tau_{\scr{P}}(q_2,n) $ as $\scr{P}$ is increasing. Thus, the function $q\rightarrow \limsup_{n\to\infty} \tau_{\scr{P}}(q,n)$ is non-decreasing,
and so the limit,
\[
\tau_{\scr{P}}:=\lim_{q\to 1^-}\limsup_{n\to\infty} \frac{\tau_{\scr{P}}(q,n) }{n},
\]
is guaranteed to exist. The goal is typically to compute upper and lower bounds on $\tau_{\scr{P}}$
for various properties $\scr{P}$.

\subsection{Main Results}

In this work, we focus on the property of having a perfect matching, which we denote by ${\tt PM}$. We remark that by convention, we say that a graph on an odd number of vertices has a perfect matching, if the matching saturates all but one vertex. Hence, if $n$ is odd, then $\tau_{{\tt PM}}(q,n)\le \tau_{{\tt PM}}(q,n+1)$ by the following natural coupling between the two corresponding processes. Indeed, the first strategy on $n$ vertices may steal the second strategy on $n+1$ vertices if the square and the circle are both on $[n]$. Otherwise, the square and the circle are placed uniformly at random on $[n]$. Immediately the subgraph induced by $[n]$ by the second strategy is a subgraph of the graph constructed by the first strategy and the inequality follows. Since $\tau_{{\tt PM}}$ is an asymptotic definition in $n$, it suffices to consider only even $n$. Note also that since we focus on creating perfect matchings, we shall hereby restrict our attention to strategies which do not create self-loops.

Our first result is an improvement on the upper bound of $\tau_{\texttt{PM}}$ from $1+2/e < 1.73576$ to $1.20524$. The current upper bound of $1+2/e$ follows from the fact, first observed in~\cite{beneliezer2019semirandom}, that one may couple the semi-random process with the following process that generates a random bipartite graph. This process is known to have a perfect matching with probability tending to $1$ as $n \rightarrow \infty$ (\emph{a.a.s.})~\cite{pittel}. The graph has two bipartite parts of size $n/2$ and is generated in two rounds. The first round is the bipartite version of the 1-out process. That is, each vertex chooses an out-neighbour independently and uniformly at random from the other vertex part. A vertex is classified as unpopular if it has been chosen by at most one vertex. In the second round, each unpopular vertex chooses another out-neighbour independently and uniformly at random from the other vertex part. The final bipartite graph is obtained by ignoring the directions of the arcs. 

We propose a fully adaptive algorithm to construct a semi-random graph with a perfect matching, which gives the following upper bound on $\tau_{\texttt{PM}}$.

\begin{theorem} \label{thm:main_upper_bound}
$\tau_{\texttt{PM}} \le \beta + 10^{-5} \le 1.20524$, where $\beta$ is derived
from a system of differential equations.
\end{theorem}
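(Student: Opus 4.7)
The plan is to design an explicit multi-phase adaptive strategy $\mathcal{S}$ and analyse it via Wormald's differential equations method. I expect $\mathcal{S}$ to have the broad shape of a seeding phase, in which each incoming square $u_t$ is paired with a circle drawn from a prescribed pool (uncovered vertices, or vertices with a specified degree profile towards the current partial matching $M$), building a large partial matching $M$ together with a reserve of ``helper'' vertices; a completion phase in which each new edge either absorbs an unmatched vertex into $M$ directly or extends a short augmenting structure that improves $M$; and a short cleanup phase that dispatches the last $o(n)$ defective vertices by verifying Hall's condition on a small residual bipartite graph whose degrees are guaranteed by the earlier phases.

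Within each phase I would introduce a vector $(X_0^{(n)}(t), X_1^{(n)}(t), \ldots)$ of key counts --- for example, the number of still unmatched vertices, the number of vertices with a prescribed degree profile towards $M$, and the size of the set on which the strategy currently acts --- and compute the one-step conditional expectations $\E[X_i^{(n)}(t+1) - X_i^{(n)}(t) \mid \mathcal{F}_t]$ as functions of the scaled variables $x_i(\tau)$ with $\tau = t/n$. Because the strategy is Markovian in the chosen state and the update rule only uses a bounded amount of local information, this yields a closed autonomous ODE system. Wormald's theorem then gives $X_i^{(n)}(\tau n) = x_i(\tau)\, n + o(n)$ a.a.s., uniformly on any interval where the solution exists and the strategy remains well-defined. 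The phase endpoints $\tau_1, \tau_2, \tau_3$ are determined by structural stopping conditions read off from the ODE solution, and one sets $\beta := \tau_1 + \tau_2 + \tau_3$.

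The main obstacle is a design one: improving the constant from $1+2/e \approx 1.736$ down to roughly $1.205$ forces the strategy to build many matching edges early while simultaneously maintaining enough flexibility (via augmenting structure or a reserve of degree-$1$ helpers) to absorb every late-arriving unmatched vertex without wasting rounds. A secondary obstacle is proving that the residual graph after the completion phase is a.a.s.\ matchable; this is standard in spirit but requires a careful Hall-type argument on a sub-linear ``defect'' bipartite graph whose minimum degrees are controlled by the ODE trajectories established in the earlier phases. The additive $10^{-5}$ slack accommodates the $o(n)$ error in Wormald's theorem so that the structural conditions certifying a perfect matching hold a.a.s., not merely in expectation. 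The numerical bound $\beta < 1.20524$ then follows from a rigorous (for instance, interval-arithmetic) numerical integration of the ODE system and summation of the resulting phase lengths.
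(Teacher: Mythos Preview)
Your proposal is not a proof but a template for one: it correctly identifies the overall architecture (an adaptive multi-phase strategy analysed by Wormald's method, followed by a sublinear cleanup), but it leaves the actual content --- the strategy and the resulting ODE system --- entirely unspecified. Since the theorem asserts a specific numerical bound $\beta + 10^{-5} \le 1.20524$, the entire difficulty lies in designing a concrete strategy whose ODE trajectory hits the target; ``a seeding phase, a completion phase, and a cleanup'' with unspecified pools and update rules does not determine any value of $\beta$, let alone one below $1.20524$. The paper's strategy has two ingredients you do not mention and that are not recoverable from your sketch: (i) a red/green colouring scheme that maintains, for each matched edge hit by a square, an augmenting path of length three to an unsaturated vertex, so that a later square on the red endpoint enlarges the matching; and (ii) the rule that each circle is placed on an unsaturated vertex with the \emph{fewest} circles, which forces the unsaturated set to be load-balanced and lets the process be decomposed into a sequence of phases $q=1,2,\ldots$ (in phase $q$ every unsaturated vertex carries $q-1$ or $q$ circles), each governed by its own two-dimensional ODE. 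The constant $\beta$ is obtained by numerically chaining these phase ODEs for $k=1100$ phases and then a short run of a simpler randomised variant; without (i) and (ii) there is no system to integrate.

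Two smaller points. First, the additive $10^{-5}$ is not Wormald slack: it is the actual number of rounds (divided by $n$) spent by an explicit iterative clean-up procedure that repeatedly builds fresh red vertices and augments until the last $\eps n$ vertices are saturated, with $\eps = 10^{-14}$ chosen so that $100\sqrt{\eps} = 10^{-5}$. Your proposed cleanup via Hall's condition on a residual bipartite graph would require degree guarantees on the unsaturated side that the algorithm does not provide (unsaturated vertices can have arbitrarily few squares), so that route does not obviously work here. Second, your state vector ``number of vertices with a prescribed degree profile towards $M$'' is far too coarse; the analysis needs to track the number of red vertices of each type (equivalently, the circle-count distribution on the unsaturated set), and it is the minimum-circle placement rule that collapses this to a two-variable system per phase.
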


Our second result is an improvement on the lower bound of $\tau_{\texttt{PM}}$ from $\ln(2) \ge 0.69314$ to $0.93261$.
First observe that trivially $\tau_{{\tt PM}} \ge 1/2$, as any strategy of the player must wait at least $n/2$ rounds in order to build a perfect matching. In fact, this lower bound can be improved to $\ln(2) \ge 0.69314$, as was first observed in~\cite{beneliezer2019semirandom}. There are two obvious necessary conditions for the existence of a perfect matching, both giving exactly the same lower bound. If there exists a perfect matching in $G_t$, then $G_t$ has the minimum degree at least 1 and there are at least $n/2$ vertices in $G_t$ with at least one square. In order to warm-up we re-prove this result in Subsection~\ref{sec:warm-up}.

\begin{proposition}[\cite{beneliezer2019semirandom}] \label{prop:warm_up_lowerbound}
$\tau_{\texttt{PM}} \ge \ln(2) \ge 0.69314$.
\end{proposition}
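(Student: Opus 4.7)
The plan is to establish the lower bound via a strategy-independent observation about which vertices must appear as squares. Fix any strategy $\scr{S}$ and any round $t$. Every edge of $G_t$ was inserted in some round $r \le t$ as a square--circle pair $(u_r, v_r)$. Consequently, if $M$ is a perfect matching in $G_t$, then each of the $n/2$ edges of $M$ has at least one endpoint that appeared as a square at some point in rounds $1, \ldots, t$; since the edges of $M$ are vertex disjoint, this forces at least $n/2$ distinct vertices to lie in the set $U_t := \{u_1, \ldots, u_t\}$ (counted without multiplicity). Thus, a necessary condition for $G_t$ to contain a perfect matching is $|U_t| \ge n/2$.

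Next, I would observe that the distribution of $|U_t|$ is independent of $\scr{S}$, since $u_1, \ldots, u_t$ are chosen i.i.d.\ uniformly from $[n]$. Writing $Y_t := n - |U_t| = \sum_{j=1}^n \mathbf{1}[j \notin U_t]$, the classical coupon-collector computation gives $\E[Y_t] = n(1-1/n)^t$. For $t = \lfloor cn \rfloor$ with $c < \ln 2$, this is $(1+o(1)) n e^{-c} > n/2$ for all sufficiently large $n$. Since $Y_t$ is a function of the independent random variables $u_1, \ldots, u_t$ and altering any single $u_i$ changes $Y_t$ by at most $1$, the Azuma--Hoeffding inequality (or, equivalently, a direct Chernoff bound using that the indicators are negatively correlated) gives $|Y_t - \E[Y_t]| = o(n)$ with probability $1 - o(1)$, and hence $\Prob[|U_t| \ge n/2] = o(1)$.

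Combining these two facts yields $\Prob[G_t \in \texttt{PM}] \le \Prob[|U_t| \ge n/2] = o(1)$ for every strategy $\scr{S}$ whenever $t = \lfloor cn \rfloor$ and $c < \ln 2$. Therefore, for every $q \in (0,1)$ and every such $c$, $\tau_{\texttt{PM}}(q,n) > cn$ for all sufficiently large $n$, so $\limsup_{n\to\infty} \tau_{\texttt{PM}}(q,n)/n \ge c$; letting $c \to (\ln 2)^-$ and then $q \to 1^-$ gives $\tau_{\texttt{PM}} \ge \ln 2$. There is essentially no obstacle: the only genuinely conceptual step is the first one, identifying the strategy-independent necessary condition $|U_t| \ge n/2$, after which a routine concentration argument closes the proof.
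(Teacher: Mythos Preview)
Your proof is correct and follows essentially the same approach as the paper: both identify the strategy-independent necessary condition that at least $n/2$ vertices must be covered by squares (your $|U_t|$ is the paper's $X(t)$), compute the expectation via the coupon-collector formula, and apply a concentration estimate. The only cosmetic difference is that you invoke Azuma--Hoeffding while the paper uses a second-moment computation, and you unpack the conclusion directly from the definition of $\tau_{\texttt{PM}}$ rather than citing the equivalence proposition.
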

The precise statement of our lower bound is in terms of a root of a function. Specifically,
define
\[
\alpha = \inf\{b\ge 0: g(b) \ge 1/2 \},
\]
where
\begin{equation*}
    g(b) := 1 + \frac {1-2b}{2} \exp(-b) - (b+1) \exp(-2b) - \frac {1}{2} \exp(-3b) 
\end{equation*}
We prove the following:
\begin{theorem} \label{thm:main_lower_bound}
$\tau_{\texttt{PM}} \ge \alpha \ge 0.93261$.
\end{theorem}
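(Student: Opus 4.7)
Fix a no-loop strategy $\scr{S}$ and let $b<\alpha$, so that $g(b)<1/2$. The plan is to show that $\mb{P}[G_{bn}^{\scr{S}}\in\texttt{PM}]=o(1)$, which by the definition of $\tau_{\texttt{PM}}$ gives $\tau_{\texttt{PM}}\ge\alpha$. The warm-up proof of Proposition~\ref{prop:warm_up_lowerbound} in Subsection~\ref{sec:warm-up} rests on the observation that the set $W_t$ of vertices never appearing as a square is independent in $G_t$, so any perfect matching pairs each $w\in W_t$ with a distinct partner in $V\setminus W_t$, forcing $|W_t|\le n/2$; combined with $|W_t|/n\to e^{-b}$ this recovers $\tau_{\texttt{PM}}\ge \ln 2$. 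My plan is to sharpen this obstruction by also tagging vertex classes in $V\setminus W_t$ that are ``effectively one-sided''---for example, vertices in $V\setminus W_t$ whose square-roles have all been spent pairing with $W_s$-circles at the time of those rounds, or vertices whose square- and circle-roles together commit them to a small neighborhood---so that the resulting union $U_t\supseteq W_t$ must satisfy $|U_t|\le n/2$ in any perfect matching. The three exponential scales $e^{-b}$, $e^{-2b}$, $e^{-3b}$ appearing in the definition of $g(b)$ should correspond naturally to the successive commitment layers (single, pair, and triple Poisson events in the square-role structure of vertices), so that $|U_t|/n\to 1-g(b)$ in the scaling limit.

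\medskip

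To make this quantitative I would apply Wormald's differential equation method to a small vector of empirical densities. The raw square-role densities $x_k(b):=|\{v:S_v=k\}|/n$ have strategy-independent limits $b^k e^{-b}/k!$, while the commitment-layer densities satisfy a coupled ODE system in which the player's circle choice at each round enters as a control. The key step is an adversarial/exchange argument: show that, for every strategy, the trajectory of the obstruction densities lies above a strategy-independent envelope which is itself the solution of a closed ODE system whose first integral is $g(b)$. Azuma--Hoeffding applied to the Doob martingale of $|U_t|$ (single-step differences are $O(1)$) then upgrades the expectation bound to $|U_t|/n\ge 1-g(b)-o(1)$ w.h.p., uniformly in $\scr{S}$. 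Since $1-g(b)>1/2$ whenever $b<\alpha$, this gives $|U_t|>n/2$ a.a.s., ruling out a perfect matching; letting $b\uparrow\alpha$ then completes the proof.

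\medskip

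The main obstacle is the adversarial step. Because the strategy is fully adaptive, one cannot simply fix a deterministic strategy and run the ODE method in the usual way; one must show that the expected per-round drift of the obstruction is concave in the player's distribution over the circle $v_s$, so that the extremal trajectory is realized by a deterministic ``greedy'' strategy---typically the one that places each circle on a currently least-served vertex. Verifying this concavity in the multi-layer commitment setting (which is precisely what generates the three-term structure of $g(b)$), and checking that the resulting system of differential equations integrates to the explicit closed form in the theorem, is where the bulk of the technical work will be.
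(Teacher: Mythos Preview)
Your plan has the right skeleton---define an obstruction set whose size must be at most $n/2$ in any perfect matching, track its density via the differential-equation method, and read off the threshold from the explicit solution---but it diverges from the paper's proof at the one point you flag as the ``main obstacle,'' and this divergence is fatal as stated.

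\medskip

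\textbf{The adversarial step is a red herring.} The paper does \emph{not} optimise over strategies, and no concavity/exchange argument is needed. The key insight you are missing is that one can choose the tracked quantities so that their expected one-step drifts are \emph{strategy-independent}. Concretely, the paper defines a vertex $j$ to be \emph{redundant} if it is covered by exactly one square $u_s$ and the circle $v_s$ that the player attached to it is later hit by another square; it then defines \emph{well-positioned} redundant vertices (those whose circle-target is itself redundant). The expected change in the count $U(t)$ of redundant vertices satisfies
\[
\mb{E}[U(t{+}1)-U(t)\mid H_t]=\frac{Y(t)-U(t)}{n}-\frac{U(t)}{n},
\]
where $Y(t)$ is the number of vertices with exactly one square. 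This holds for \emph{every} strategy, because each of the $Y(t)-U(t)$ non-redundant single-square vertices becomes redundant with probability exactly $1/n$ at the next step, regardless of where its circle was placed. The same phenomenon occurs for the auxiliary variables $D(t)$ (dangerous) and $W(t)$ (well-positioned). So the ODE system is closed and deterministic from the outset; there is no envelope, no greedy extremiser, and no concavity to check. Your plan to carry out a multi-layer adversarial argument would at best reproduce this, but as described it is not a proof.

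\medskip

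\textbf{The Lipschitz claim is false.} You assert that single-step differences of $|U_t|$ are $O(1)$. They are not: if the player concentrates many circles on a single vertex $v$, then one square landing on $v$ can flip arbitrarily many vertices to redundant in one step. The paper handles this with a separate reduction (its Lemma~\ref{lem:light_matching}): it passes to an ``approximate perfect matching'' property $\texttt{PM}(\mu,\mu^{-1})$ and shows that any strategy can be replaced by a $2\mu$-well-behaved one (no vertex receives more than $2\mu$ circles) without loss. Only after this reduction are the one-step changes bounded by $\omega=2\mu$, which is what the DE method needs.

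\medskip

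\textbf{The combinatorial link is missing.} You do not state, let alone prove, the inequality that actually connects the tracked quantities to perfect matchings. The paper's Proposition~\ref{prop:first_improvement} is the combinatorial heart: if $G_t$ has an (approximate) perfect matching, then $X(t)-U(t)+W(t)\ge n/2 - o(n)$. This requires a careful edge-counting argument partitioning the matching edges according to whether their square-end is redundant, well-positioned, or neither; in particular one must check that redundant-but-not-well-positioned vertices contribute no matching edges among themselves. Your description of $U_t$ as ``effectively one-sided'' commitment layers is too vague to yield any such inequality, and the specific form $x-u+w$ is exactly what produces the three exponential terms in $g(b)$.
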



\subsection{Previous Results} 

Let us briefly describe a few known results on the semi-random process. In the very first paper~\cite{beneliezer2019semirandom}, it was shown that the process is general enough to approximate (using suitable strategies) several well-studied random graph models. In the same paper, the process was studied for various natural properties such having minimum degree $k \in \Nn$ or having a fixed graph $H$ as a subgraph. In particular, it was proven that \emph{a.a.s.}\ one can construct $H$ in less than $\omega \, n^{(d-1)/d}$ rounds where $d \ge 2$ is the degeneracy of $H$, and $\omega=\omega(n)$ is any function tending to infinity as $n \to \infty$. It was conjectured that their general lower bound is sharp. This conjecture was recently proven in~\cite{behague2021subgraph}.

The property of having a Hamilton cycle, which we denote by ${\tt HAM}$, was also studied for the semi-random process. As observed in~\cite{beneliezer2019semirandom}, if $G_t$ has a Hamilton cycle, then $G_t$ has the minimum degree at least 2 yielding $\tau_{\tt HAM} \ge \ln 2+ \ln(1+\ln2) \ge 1.21973$. On the other hand, it is known that the famous $3$-out process is \emph{a.a.s.}\ Hamiltonian~\cite{3out}. As the semi-random process can be coupled with the $3$-out process, we get that $\tau_{\tt HAM} \le 3$. A new upper bound was obtained in~\cite{gao2020hamilton} in terms of an optimal solution of an optimization problem, whose value is believed to be $2.61135$ by numerical support. In the same paper, the lower bound mentioned above was shown
to not be tight. Recently, the authors of this paper managed to analyze a fully adaptive algorithm further improving the upper bound and managed to substantially improve the lower bound. These results will be included in the forthcoming paper~\cite{draft}.

Finally, let us mention about the property of containing a given spanning graph $H$ as a subgraph. It was asked by Noga Alon whether for any bounded-degree $H$, one can construct a copy of $H$ \emph{a.a.s.}\ in $O(n)$ rounds.  This question was answered positively in a strong sense in~\cite{beneliezer2020fast}, in which is was shown that any graph with maximum degree $\Delta$ can be constructed \emph{a.a.s.}\ in $(3\Delta/2+o(\Delta))n$ rounds. They also proved that if $\Delta = \omega (\log(n))$, then this upper bound improves to $(\Delta/2 + o(\Delta))n$ rounds. Note that both of these upper bounds are asymptotic in $\Delta$. When $\Delta$ is constant in $n$, such as in both the perfect matching and Hamiltonian cycle setting, determining the optimal dependence on $\Delta$ for the number of rounds needed to construct $H$ remains open.

\section{Upper Bound}

In this section, we describe an adaptive algorithm which builds a perfect matching in around $1.2n$ rounds \emph{a.a.s.}
In order to warm-up, we start with a slightly simpler algorithm to analyze and then discuss the adjustments needed to claim the desired upper bound.
Note that throughout the paper, floor is automatically applied to any number that needs to be an integer (typically a function of $n$).

\subsection{Warming-up}\label{warm-up-upper-bound} 
In this section we analyse a simplified algorithm which constructs a semi-random graph with a perfect matching in less than $1.28n$ steps.
The algorithm operates in two stages. 
In the first stage, the algorithm keeps building the matching greedily whenever possible, but will keep $u_tv_t$ for future augmentation if $u_t$ lands on a saturated vertex.
In each step of the algorithm we will keep track of a matching $M_t$ that is currently built and the set $U_t$ of unsaturated vertices. We will also maintain a set of red vertices which is a subset of vertices that are saturated by $M_t$. 
If a red vertex is hit by a square, then an augmenting path will be created and we can update $M_t$ to a larger matching.
In order to briefly explain the idea, imagine $uv$ is an edge in the matching. If a square lands on $u$ then we will place its corresponding circle on an unsaturated vertex $x$, and then colour $v$ red. If in the future a square is placed on $v$, then we will place its corresponding circle on an unsaturated vertex $y$. Now the path $xuvy$ is an augmenting path. When the red vertex $v$ receives a square, the algorithm needs to know the neighbour of $u$ that is unsaturated, and there might be several of them. In this simple algorithm, we keep track of only the first square that landed on $u$. 

We now formally describe the first stage of the algorithm.
Suppose that at step $t$, $u_t$ lands on a saturated vertex $u$ and $v_t$ is the corresponding circle which is necessarily placed on an unsaturated vertex. Colour vertex $u$ as well as the edge $u_tv_t$ green, and colour the mate of $u$ in the matching red.
Below is a formal description of the $t$-th step of the algorithm.
\begin{enumerate}[label=(\subscript{A}{{\arabic*}})]
    \item If $u_t\in U_{t-1}$ then let $v_t$ be a uniformly random vertex in $U_{t-1}$. Let $M_t=M_{t-1}\cup \{ u_tv_t \}$ and $U_t=U_{t-1}\setminus \{u_t,v_t\}$. For every green vertex $x$, if it is adjacent to either $u_t$ or $v_t$ by a green edge then uncolour this green edge and uncolour $x$ (from green) and uncolour the mate of $x$ in $M_{t}$ (from red). \label{eqn:randomized_case_1}
    \item If $u_t$ lands on a red vertex, then let $v_t$ be a uniformly random vertex in  $U_{t-1}$. Let $x$ be the mate of $u_t$ in $M_{t-1}$. Let $y$ be the vertex in $U_{t-1}$ which is adjacent to $x$ by a green edge. Let $M_t$ be the matching obtained by augmenting along the path $yxu_tv_t$, and let $U_t = U_{t-1} \setminus \{y, v_t\}$. Update the green vertices and edges and the red vertices accordingly as in \ref{eqn:randomized_case_1}. \label{eqn:randomized_case_2}
    \item If $u_t$ lands on an uncoloured saturated vertex, then choose $v_t$ uniformly at random from $U_{t-1}$. Colour the edge $u_tv_t$ and the vertex where $u_t$ lands on green and colour the mate of $u_t$ in $M_{t-1}$ red. Let $M_t=M_{t-1}$ and $U_t = U_{t-1}$. \label{eqn:randomized_case_3}
    \item If $u_t$ lands on a green vertex, then let $v_t$ be an arbitrary vertex in $[n]$. Let $M_t=M_{t-1}$ and $U_t = U_{t-1}$. The edge $u_tv_t$ will not be used in the matching construction in the future.
\end{enumerate}
The first stage terminates at the step when $|U_t|$ becomes at most $\eps n$ where $\eps=10^{-14}$. In the second stage, we design the algorithm which saturates the remaining unsaturated vertices in at most $100\sqrt{\eps}n=10^{-5}n$ steps. We will define and analyse the second stage after analysing the first.

\medskip
Let $X(t)=2|M_t|$ and let $R(t)$ denote the number of red vertices. By the algorithm, $R(t)$ is also equal to the number of green vertices, and thus equal to the number of green edges.
Let $H_t:=(X(i), R(i))_{0\le i\le t}$. Note that $H_t$ does \textit{not} encompass the full history of random graph process at time $t$ (i.e., $G_0, \ldots ,G_t$ -- the first $t+1$
graphs constructed by the randomized algorithm).
We condition on less information so that the circle placements amongst the unsaturated vertices remain distributed uniformly at random. This allows
us to claim the following expected difference equations:
\begin{eqnarray*}
\E[X(t+1)-X(t)\mid H_t]&=&\frac{2(n-X(t) + R(t))}{n} + O(1/n),\\
\E[R(t+1)-R(t)\mid H_t]&=& \frac{n-X(t)}{n} \cdot \frac{-2R(t)}{n-X(t)}+ \frac{R(t)}{n}    \left(-1- \frac{2(R(t)-1)}{n-X(t)} \right) \\
&& + \frac{X(t)-2 R(t)}{n} + O(1/n).
\end{eqnarray*}
The first equation above is obvious, as the probability that $u_{t+1}$ lands on a vertex in $U_t$ or a red vertex is $(n-X(t)+R(t))/n$. The probability that $v_{t+1}$ is on the same vertex as $u_{t+1}$ in \ref{eqn:randomized_case_1}, or is on vertex $y$ in \ref{eqn:randomized_case_2}, is $O(1/n)$.
In order to justify the second equation, note that if $u_{t+1}$ lands on a vertex in $U_t$, then two vertices $z_1$ and $z_2$ in $U_t$ become saturated after the augmentation. 
Since the ends of the set of green edges in $U_t$ are uniformly distributed,  the expected number of green edges incident with $z_1$ or $z_2$ is equal to $2R(t)/(n-X(t))$.
The other ends of these green edges become uncoloured from green after the augmentation which, in turn, forces their mates to become uncoloured from red. If $u_{t+1}$ lands on a red vertex, the situation is similar, except that the mate of $u_{t+1}$ is first uncoloured from green. If $u_{t+1}$ lands on a green vertex, then there is no change to $R(t)$. Finally, if $u_{t+1}$ lands on a vertex that is neither unsaturated nor coloured, then a new green vertex is created. This explains the second equation above.

By writing $x(s)=X(sn)/n$ and $r(s)=R(sn)/n$, we have that
\begin{eqnarray}
x'&=&2(1-x+z),\label{x'}\\
r'&=&\frac{-2r}{1-x}(1-x+r)-r+x-2r, \label{y'}
\end{eqnarray}
with the initial conditions $x(0)=r(0)=0$. 

The right hand sides of~(\ref{x'}) and~(\ref{y'}) are continuous and Lipschitz in the connected open set $D=\{(s,x,r): -\eps<s<3, |r|<2, -\eps<x<1-\eps/4\}$ which contains the point $(0,x(0),z(0))=(0,0,0)$. Let $T_D$ be the first step $t$ where $(t/n,X(t)/n, R(t)/n)\notin D$. Then,  $|X(t+1)-X(t)|\le 1$ for every $t<T_D$, and with probability $O(n^{-2})$, $|R(t+1)-R(t)|\le \log^2 n$ for every $t<T_D$ (in order to see the second assertion, note that, in expectation, every unsaturated vertex is adjacent to $R(t)/|U_t|\le 4/\eps$ green vertices, and thus the claim easily follows by standard bounds on the tail probability of a binomial variable). By the differential equation method of Wormald (see, for example,~\cite[Theorem 5.1]{de} or~\cite{warnke2019wormalds}) (applied with $\gamma=O(n^{-2})$, $\beta=\log^2 n$ and $\lambda=n^{-1/4}$),
the differential equations~(\ref{x'}) and~(\ref{y'}) with the given initial conditions have a unique solution that can extend arbitrarily close to the boundary of $D$, and \emph{a.a.s.}\ $X(t)=x(t/n)n+O(\lambda n)$ for every $t$ where $t/n<\sigma$ where $\sigma$ is the supremum of $s$ where $x(s)\le 1-\eps/3$ and $s<3$.
Numerical calculations show that $x$ reaches $1-\eps/3$ before $s=1.2769497$.\footnote{Maple worksheet may be accessed at \texttt{https://math.ryerson.ca/\~{}pralat/}.} Hence, \emph{a.a.s.}\ after $1.2769497n$ steps there are at most $\eps n$ unsaturated vertices. 

\medskip

Finally, we describe the second stage of the algorithm, which we refer to as the \textbf{clean-up algorithm}.  Let $j_0$ be the number of unsaturated vertices after the first stage. Thus, $j_0\le \eps n$. Let $j_k=\lfloor(3/4)^k j_0\rfloor$, for $k=1,2,\ldots$. 
For each $k\ge 1$, the algorithm saturates $j_{k-1}-j_k$ unsaturated vertices by doing the following.

\begin{enumerate}
    \item[(i)] Uncolour all vertices and squares in the whole graph.
    \item[(ii)] Sprinkle $\lfloor\sqrt{3 j_{k-1}n}/4\rfloor$ semi-random edges $u_tv_t$ as follows. If $u_t$ lands on a saturated vertex $x$, then let $v_t$ be an unsaturated vertex selected uniformly at random, and colour the mate of $x$ in $M_t$ by red (which is possibly already red). If $u_t$ lands on an unsaturated vertex, then let $v_t$ be an arbitrary vertex, and the algorithm will not use this edge for constructing the matching. 
    \item[(iii)] Let $\R$ be the set of red vertices after (ii). At each step $t$, if $u_t$ lands on a red vertex or an unsaturated vertex then let $v_t$ be a uniformly random unsaturated vertex, and then augment $M_t$ as in \ref{eqn:randomized_case_1} and \ref{eqn:randomized_case_2} as in the first stage, and update $\R$ accordingly. Stop when $|U_t|=j_k$. Let $T_k$ be the total number of steps in (ii) and (iii). 
\end{enumerate}

The total number of steps in the final stage of constructing a perfect matching is then $\sum_{k\ge 1} T_k$. Next we bound $T_k$ for every $k$. Of course, the number of steps in (ii) is at most $\sqrt{3 j_{k-1}n}/4$. It is easy to show by standard concentration arguments that with probability $1-O(n^{-2})$, $|\R| \ge \sqrt{3 j_{k-1}n}/5$ after (ii). (Indeed, since $j_{k-1} \le \eps n$ and the number of rounds is $\sqrt{3 j_{k-1}n}/4 = o(n)$, $\E[|\R|] \ge (1-\eps+o(1)) \sqrt{3 j_{k-1}n}/4$.) During the execution of (iii), for each augmentation that is performed, the expected number of red vertices that become uncoloured is at most 
\[
1+2\cdot \frac{\sqrt{3 j_{k-1}n}/4}{j_k},
\]
since the total number of red vertices is at most $\sqrt{3 j_{k-1}n}/4$, and the total number of unsaturated vertices in each round is at least $j_k$.
There are $\frac{1}{2}(j_{k-1}-j_k)$ total augmentations in (iii). Thus, by standard concentration arguments, with probability $1-O(n^{-2})$, at the end of (iii), 
\[
|\R|\ge \frac{\sqrt{3 j_{k-1}n}}{5}-2\cdot\left(1+ 2\cdot \frac{\sqrt{3 j_{k-1}n}/4}{j_k} \right) \cdot \frac{j_{k-1}-j_k}{2} = \frac{\sqrt{3j_{k-1}n}}{30}-\frac{j_{k-1}}{4}\ge \frac{\sqrt{2j_{k-1}n}}{30},
\]
where the last inequality holds since $j_{k-1}\le \eps n$ where $\eps=10^{-14}$ by our choice.
Consequently, the probability that a square lands on a red vertex at each step in (iii) is at least $\sqrt{2j_{k-1}}/30\sqrt{n}$. 
Hence, 
\[
\ex T_k\le \frac{1}{4}\sqrt{3j_{k-1}n}+\left(\frac{\sqrt{2j_{k-1}}}{30\sqrt{n}}\right)^{-1}\frac{j_{k-1}-j_k}{2}<3.1\sqrt{j_{k-1}n}.
\]
Then, again by standard concentration arguments, \emph{a.a.s.}\ 
\[
T_k\le \max\{4\sqrt{j_{k-1}n},\log^2 n\} =4\sqrt{j_{k-1}n} \le  4\sqrt{(3/4)^{k-1}\eps}\cdot n,\ \  \mbox{for every $k\ge 1$}.
\]
Thus, \emph{a.a.s.}\ the total number of steps consumed in the second stage is at most
\[
\sum_{k\ge 1} T_k \le \sum_{k\ge 1} 4\sqrt{(3/4)^{k-1}\eps}\cdot n <100\sqrt{\eps} n.
\]
By our choice of $\eps$, $100\sqrt{\eps}=10^{-5}$, \emph{a.a.s.}\ the total number of steps the algorithm spend on constructing a perfect matching is at most
$(1.2769497+10^{-5})n<1.27696n$.

\subsection{Improving the Upper Bound}

In this section, we describe a modified algorithm which allows us to get the claimed upper bound
of Theorem \ref{thm:main_upper_bound}. As before, the algorithm keeps building the matching greedily whenever possible. However, instead of
making random choices, it will choose its circles both deterministically and in a strategic way. Specifically,
suppose that $u_t$ lands on a saturated vertex. In this case, we choose $v_t$ such that $v_t$ has the minimum number of circles
on it amongst the vertices of $U_{t-1}$. We make the analogous decision when $u_t$ lands on an unsaturated vertex,
as well as when $u_t$ lands on a red vertex and we wish to extend the matching via an augmenting path. We now formally describe our algorithm
for round $t$:
\begin{enumerate}[label=(\subscript{B}{{\arabic*}})]
    \item If $u_t\in U_{t-1}$, then let $v_t$ be a vertex of $U_{t-1}$ with a minimum number of circles among those in $U_{t-1}\setminus\{u_t\}$. Let $M_t=M_{t-1}\cup \{ u_tv_t \}$ and $U_t=U_{t-1}\setminus \{u_t,v_t\}$. For every green vertex $x$, if it is adjacent to either $u_t$ or $v_t$ by a green edge then uncolour this green edge and $x$ (from green) and uncolour the mate of $x$ in $M_{t}$ (from red). \label{eqn:deterministic_case_1}
    \item If $u_t$ lands on a red vertex, then let $x$ be the mate of $u_t$ in $M_{t-1}$. Let $y$ be the vertex in $U_t$ which is adjacent to $x$ by a green edge. Let $v_t$ be a vertex of $U_{t-1}$ with a minimum number of circles among those in $U_{t-1}\setminus\{y\}$.   Let $M_t$ be the matching obtained by augmenting along the path $yxu_tv_t$, and let $U_t = U_{t-1} \setminus \{y, v_t\}$. Update the green vertices and edges and the red vertices accordingly as in \ref{eqn:deterministic_case_1}.  \label{eqn:deterministic_case_2}
    \item If $u_t$ lands on an uncoloured saturated vertex, then let $v_t$ be a vertex of $U_{t-1}$ with a minimum number of circles. Colour the vertex where $u_t$ lands on green and colour the mate of $u_t$ in $M_{t-1}$ red. Colour the edge $u_tv_t$ green. Let $M_t=M_{t-1}$ and $U_t = U_{t-1}$.   \label{eqn:deterministic_case_3}
    \item If $u_t$ lands on a green vertex, then let $v_t$ be an arbitrary vertex in $[n] \setminus U_{t-1}$. (We restrict to $[n] \setminus U_{t-1}$ so that we can conveniently keep track of the number of circles on our unsaturated vertices.) Let $M_t=M_{t-1}$ and $U_t = U_{t-1}$. The edge $u_tv_t$ will not be used in the matching construction in the future.  \label{eqn:deterministic_case_4}
\end{enumerate}
As before, we define $X(t)$ to be the number of saturated vertices after $t \ge 0$ rounds. 
Given $q \ge 0$, we define $\til{R}_{q}(t)$ to be the number of unsaturated vertices with
precisely $q$ circles after $t$ rounds. Note that $\til{R}_{q}(0)=0$ for all $q \ge 1$, and 
$\til{R}_{0}(0)=n$. Now, for $q \ge 1$, we say that a red vertex is of \textbf{type} 
$q$ at round $t$, provided its mate in $M_{t}$ is
adjacent to an unsaturated vertex via a green edge with precisely $q$ circles on it. 
We denote $R_{q}(t)$ as the number of red vertices of type $q$ at time $t \ge 0$.
Observe that $R_{q}(t) = q \til{R}_{q}(t)$ for $q \ge 1$ and $t \ge 0$. Observe also that for every $t\ge 0$, there is $q$ where all red vertices are either of type $q$ or of type $q+1$.

For each $q \ge 1$, let us define the stopping time $\tau_{q}$ to
be the smallest $t \ge 0$ such that $\til{R}_{j}(t) = 0$ for all $j<q$. It is obvious that $\tau_q$ is well-defined and is non-decreasing in $q$. Moreover, after the step $\tau_q$, all unsaturated vertices in the graph have precisely $q$ circles. This is obvious if what happened in step $\tau_q$ is in case \ref{eqn:deterministic_case_3}. Case \ref{eqn:deterministic_case_4} cannot happen in step $\tau_q$, as the number of circles on the unsaturated vertices does not change in this case. If case \ref{eqn:deterministic_case_1} or \ref{eqn:deterministic_case_2} occurs in step $\tau_q$, it is possible that $v_t$ receives more than $q$ circles in some rare situations, but $v_t$ becomes saturated after the augmentation and so the number of circles on every other unsaturated vertex is equal to $q$.
It follows then that  $\til{R}_{q}(t) = |U_t| = n - X(t)$.  By definition, $\tau_{0}=0$. Let us refer to \textbf{phase} $q$ as those $\tau_{q-1} \le t < \tau_{q}$.


We shall analyze our algorithm for the first $k$ phases using the differential equation
method. For the case when $k = 1100$, we show that \emph{a.a.s.}\ $X(\tau_k) \ge (1-10^{-6}) n$ and $\tau_{k} \le 1.20365 n $. We complete the perfect matching
by running the randomized algorithm from Subsection~\ref{warm-up-upper-bound}.\footnote{Maple worksheet may be accessed at \texttt{https://math.ryerson.ca/\~{}pralat/}.} The only difference is that the initial conditions are $x(0)=1-10^{-6}$ (instead of $x(0)=0$) and $z(0)=0$. The conclusion is that \emph{a.a.s.}\ after additional $0.00158n$ steps there are at most $10^{-14}n$ unsaturated vertices. As before, the clean-up algorithm for the remaining unsaturated vertices takes at most $0.00001n$ rounds, yielding the upper bound of $(1.20365 + 0.00158 + 0.00001)n = 1.20524n$.

\subsubsection{Analyzing the $k$ phases}

Unlike in the analysis of the randomized algorithm, we can safely condition on
the full history of the random graph process in each phase. 
If $G_{0},\ldots ,G_{t}$ correspond to the first $t+1$ graphs constructed
by the deterministic algorithm, then let $H_{t}:=(G_0,\ldots, G_t)$.

In phase $1$, we place circles on the unsaturated vertices, thus creating red vertices of type~$1$. This leads to the following
equations regarding the expected changes of $X(t)$ and $R_1(t)$, for $0 \le t < \tau_1$: 
\begin{eqnarray*}
\E[X(t+1)-X(t) \mid H_t]&=& 2 \left(\frac{n-X(t) + R_{1}(t)}{n} \right),\\
\E[R_{1}(t+1)- R_{1}(t) \mid H_t]&=& \left( \frac{X(t) - 2R_{1}(t)}{n}\right) - \frac{2 R_{1}(t)}{n} + O(1/n).
\end{eqnarray*}
Observe that the first equation follows, since $2$ vertices become saturated whenever
a square lands on a red vertex, or an unsaturated vertex. The second equation follows
since we create an additional red vertex whenever a square lands on an uncoloured saturated
vertex (of which there exist $X(t) - 2R_{1}(t)$). We destroy a red vertex whenever a square
lands on a type $1$ red vertex, or an unsaturated vertex with one circle on it. There
are $R_{1}(t)$ of each of these. Note that the $O(1/n)$ term accounts for contributions from the following two cases. In the first case, $u_t$ lands on an unsaturated vertex which is the unique one in $U_t$ which is not covered by a circle. In that case, $R_1(t)$ decreases by one, but the probability that this case occurs is $O(1/n)$. In the other case, $u_t$ lands on a red vertex as in case \ref{eqn:deterministic_case_2} where $y$ happens to be the only unsaturated vertex in $U_t$ that is not covered by any circle. Again, it is easy to see that the expected contribution from this case is $O(1/n)$. 

Suppose that the functions $x_1,y_1: [0, \infty) \rightarrow \mb{R}$
are the the unique solution to the following system of differential equations:
\begin{eqnarray*} 
    x_1' &=& 2(1 - x_1 + y_1) \\
    y_{1}' &=& x - 4y_1,
\end{eqnarray*}
with initial conditions $x_1(0)=y_{1}(0)=0$.
Let  $c_1 > 0$ be the unique value such that $y_{1}(c_1)= 1 - x_{1}(c_1)$.
By applying the differential equation method, we can derive the following:
\begin{proposition} \label{prop:phase_one}
There exists a function $\eps_1(n) =o(1)$, such
that $a.a.s$ for all $0 \le t \le \tau_1$ it holds that
\begin{equation*}
    \max\{|X(t) -x_1(t/n) n|,|R_{1}(t) - y_{1}(t/n) n|\} \le \eps_1 n.
\end{equation*}
Moreover, $|\tau_{1}-c_{1} n| \le \eps_1 n$ $a.a.s$.
\end{proposition}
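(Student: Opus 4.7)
The plan is to apply Wormald's differential equation method (see e.g.~\cite[Theorem 5.1]{de} or~\cite{warnke2019wormalds}) to the pair $(X(t), R_1(t))$ throughout phase~$1$. The key preliminary observation is that during phase~$1$ the algorithm places every circle on an unsaturated vertex of minimum circle-count, which is $0$ as long as $\til{R}_0(t) > 0$; hence every unsaturated vertex carries $0$ or $1$ circles, every red vertex is of type~$1$, and $R_1(t) = \til{R}_1(t)$. Moreover, $\tau_1$ is then precisely the smallest $t$ with $R_1(t) = n - X(t)$, i.e.\ the moment every unsaturated vertex carries a circle.

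I would fix a small $\delta > 0$ and let
\[
D = \{(s,x,y) : -\delta < s < c_1 + \delta,\ -\delta < x < 1,\ -\delta < y,\ y < 1 - x + \delta\},
\]
an open bounded connected subset of $\mathbb{R}^3$ containing $(0,0,0)$, and let $T_D$ be the first step $t$ at which $(t/n, X(t)/n, R_1(t)/n) \notin D$. A brief case analysis on~\ref{eqn:deterministic_case_1}--\ref{eqn:deterministic_case_4} gives $|X(t+1) - X(t)| \le 2$ and $|R_1(t+1) - R_1(t)|$ bounded by an absolute constant (using that unsaturated vertices carry at most one circle in this phase), which verifies the boundedness hypothesis. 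The two displayed expected difference equations provide the trend hypothesis with right-hand side $F(s,x,y) = \bigl(2(1-x+y),\, x - 4y\bigr)$, which is polynomial and thus Lipschitz on $D$. Wormald's theorem then guarantees that the ODE system with initial condition $(x_1(0), y_1(0)) = (0,0)$ has a unique solution extending until it leaves the $(x,y)$-projection of $D$, and that with $\lambda := n^{-1/4}$, \emph{a.a.s.}
\[
\max\{|X(t) - x_1(t/n)\, n|,\ |R_1(t) - y_1(t/n)\, n|\} \le \lambda n
\qquad \text{for all } 0 \le t \le \min\{T_D, (c_1 + \delta) n\}.
\]

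To transfer this concentration to $\tau_1$, set $h(s) := (1 - x_1(s)) - y_1(s)$, so that $\tau_1$ corresponds to the first zero of the discrete analogue of $h$. By definition of $c_1$ we have $h(s) > 0$ on $[0, c_1)$ and $h(c_1) = 0$; substituting the ODEs gives $h'(c_1) = -x_1'(c_1) - y_1'(c_1) = -x_1(c_1) < 0$ (since $x_1'(0) = 2 > 0$ forces $x_1(c_1) > 0$), so the crossing at $s = c_1$ is transversal. Combining uniform closeness of the discrete trajectory to $(x_1, y_1)$ with this transversality, a standard continuity argument yields $|\tau_1/n - c_1| = O(\lambda)$ a.a.s., and one may take $\eps_1(n) := C\lambda \to 0$ for a suitable constant $C$. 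The main obstacle I foresee is the careful choice of $\delta$: one must argue that the only way for the process to leave $D$ near time $c_1 n$ is through the face $\{y = 1 - x\}$ corresponding to $\tau_1$, so that $T_D$ and $\tau_1$ coincide up to the stated $O(\lambda n)$ error, rather than exiting $D$ through some other face.
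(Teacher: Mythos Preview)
Your proposal is correct and follows exactly the route the paper indicates: the paper simply states that Proposition~\ref{prop:phase_one} follows ``by applying the differential equation method'' to the expected one-step changes displayed just above it, and you have filled in precisely those details (boundedness of increments via the at-most-one-circle property in phase~$1$, Lipschitz drift, and the transversality of $h(s)=1-x_1(s)-y_1(s)$ at $c_1$ to pin down $\tau_1$). The concern you flag about exiting $D$ through the correct face is handled by observing that $x_1(c_1)<1$ (since $y_1(c_1)=1-x_1(c_1)>0$), so for small $\delta$ the trajectory stays bounded away from the faces $x=1$ and $y=-\delta$, leaving $\{y=1-x\}$ as the only exit face near $s=c_1$.
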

Let us now consider phase $q \ge 2$.
Observe that there exist red vertices only of types $q-1$ and $q$ during phase $q$. 
Thus, we can track the total number of red vertices at time $t$ by just focusing on 
the random variables $R_{q-1}(t)$ and $R_{q}(t)$.
Specifically, observe the following for $\tau_{q-1} \le t < \tau_{q}$:
\begin{eqnarray*}\label{eqn:phase_expected_changes}
\E[X(t+1)-X(t) \mid H_t]&=&2 \left(\frac{n-X(t)}{n}+\frac{R_{q-1}(t)}{n} + \frac{R_{q}(t)}{n} \right),\\
\E[R_{q}(t+1)- R_{q}(t) \mid H_t]&=& q\left( \frac{(X(t) - 2R_{q}(t) - 2 R_{q-1}(t)) - R_{q}(t)}{n}\right) \\
&&- \frac{R_{q}(t)}{n} + O(1/n), 
\end{eqnarray*}
where
\[
R_{q-1}(t)=(q-1)\left(n-X(t)-\frac{R_q(t)}{q}\right).
\]
The first equation follows since $2$ vertices become saturated whenever a square lands on either an unsaturated
vertex, or a red vertex. The second equation describes the expected change in the red vertices of type
$q$. Note that $q$ such vertices are created when a square lands on a saturated and uncoloured
vertex, of which there are precisely $X(t) - 2R_{q}(t) -2R_{q-1}(t)$. Similarly, $q$ red vertices of type $q$ are destroyed
when the square lands on a red vertex of type $q$, as well as when the square lands on an unsaturated vertex with precisely
$q$ circles. Observe that there are $R_{q}(t)$ of the former, and $\til{R}_{q}(t)=  R_{q}(t)/q$  of the latter.
Finally, the $O(1/n)$ term accounts for the rare case when $v_t$ is forced to be placed on a vertex with $q$ circles for the same reason as in phase 1.



We define
a system of differential equations for each phase $q \ge 2$, whose initial conditions
are determined inductively. The differential equation method implies that the trajectories of the random variables $X(t)$ and $R_q(t)$ are concentrated around  the solutions of these differential equations in every phase. Let us assume that
we have defined a system for phase $q-1$, whose solution has functions $x_{q-1}$ and $y_{q-1}$, and a value $c_{q-1} \ge 0$
such that $y_{q-1}(c_{q-1}) = 1 - x_{q-1}(c_{q-1})$  (observe that the base case holds by Proposition~\ref{prop:phase_one}). Note that we may interpret $c_{q-1} n$ as approximately the point in
the process at which all the unsaturated vertices have $q-1$ circles, thus corresponding to the end of phase $q-1$.
Define $x_{q}, y_{q}, z_{q}:[c_{q-1}, \infty) \rightarrow \mb{R}$ 
to be the unique solution to the following system of differential equations:
\begin{eqnarray*}
    x_{q}' &=& 2(1 - x_q + y_q + z_{q}), \\
    y_{q}' &=& q (x_q - 3 y_{q} - 2z_{q}) - y_{q},
\end{eqnarray*}
with initial conditions $x_{q}(c_{q-1})=x_{q-1}(c_{q-1})$, $z_{q}(c_{q-1}) = y_{q-1}(c_{q-1})$, and $y_{q}(c_{q-1})=0$, where $z_q=(q-1)(1-x-y_q/q)$.
Let $c_{q} \ge 0$ be the unique value such that $y_{q}(c_q) = 1- x_{q}(c_q)$ -- observe
that this is precisely the value such that $z_{q}(c_q)=0$, 
for all $q \ge 2$.
By applying the differential equation method inductively for each phase $q$, we get the following:
\begin{proposition} \label{prop:phase_q}
For each $q \ge 1$, there exists a function $\eps_q(n) =o(1)$, such
that $a.a.s$, for all $0 \le t \le \tau_q$ it holds that
\begin{equation*}
    \max\{|X(t) -x_q(t/n) n|,|R_{q}(t) - y_{q}(t/n) n|\} \le \eps_q n.
\end{equation*}
Moreover, $|\tau_{q}-c_{q} n| \le \eps_q n$ $a.a.s$.
\end{proposition}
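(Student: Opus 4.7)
The plan is to prove the proposition by induction on $q \ge 1$, with Proposition~\ref{prop:phase_one} supplying the base case $q=1$. For the inductive step, assume the claim has been verified through phase $q-1$, so that \emph{a.a.s.} $\tau_{q-1} = c_{q-1} n + o(n)$, $X(\tau_{q-1}) = x_{q-1}(c_{q-1}) n + o(n)$, and $R_{q-1}(\tau_{q-1}) = y_{q-1}(c_{q-1}) n + o(n)$. Since $c_{q-1}$ is defined by $y_{q-1}(c_{q-1}) = 1 - x_{q-1}(c_{q-1})$, by the time phase $q$ begins essentially every unsaturated vertex carries exactly $q-1$ circles; in particular there are no type-$q$ red vertices (so $R_q(\tau_{q-1}) = 0$, matching the initial condition $y_q(c_{q-1}) = 0$), and the identity $R_{q-1}(t) = (q-1)(n - X(t) - R_q(t)/q)$ holds throughout phase $q$. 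Hence it suffices to track the pair $(X(t), R_q(t))$ during phase $q$.

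Next I would justify the displayed expected one-step drifts by conditioning on $H_t$ and enumerating the four cases $(B_1)$--$(B_4)$ according to where the incoming square $u_{t+1}$ lands. The key structural fact is that each green vertex is adjacent via its green edge to a unique unsaturated vertex (which has $q-1$ or $q$ circles during phase $q$), so a square landing on an uncoloured saturated vertex creates exactly $q$ type-$q$ red vertices, while squares landing on a type-$q$ red vertex or on the unique-per-destruction unsaturated vertex with $q$ circles destroy $q$ and $q$ type-$q$ red vertices respectively. The $O(1/n)$ term absorbs the atypical situations in which the minimum-circle rule forces a ``wrong'' placement, namely when the only uncovered vertex in $U_t$ is $u_t$ itself or the vertex $y$ of case $(B_2)$. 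I would also record the deterministic bounds $|X(t+1) - X(t)| \le 2$ and $|R_q(t+1) - R_q(t)| = O(q)$, which are harmless since $q$ remains bounded (at most $1100$ in the application of Theorem~\ref{thm:main_upper_bound}).

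The heart of the argument is an application of Wormald's differential equation method on the open connected domain $D_q = \{(s,x,y) : c_{q-1} - \eta < s < c_q + \eta,\ |y| < 2,\ x < 1 - \eta'\}$ for small $\eta, \eta' > 0$. After substituting $z_q = (q-1)(1 - x_q - y_q/q)$, the right-hand sides of the system for $(x_q, y_q)$ are polynomial in $x_q$ and $y_q$ and hence Lipschitz on compact subsets of $D_q$; standard ODE theory yields a unique solution extendible to the boundary, and the defining property of $c_q$ ensures that the trajectory meets the boundary piece $y_q = 1 - x_q$ in finite time. Wormald's theorem, applied with $\lambda = n^{-1/4}$, then gives \emph{a.a.s.} uniform concentration of $X(t)/n$ and $R_q(t)/n$ near $x_q(t/n)$ and $y_q(t/n)$ throughout phase $q$, and at the stopping time $\tau_q$ this yields $\tau_q = c_q n + o(n)$.

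The main obstacle is cleanly handing data over between phases: the inductive hypothesis pins the starting values $\tau_{q-1}, X(\tau_{q-1}), R_{q-1}(\tau_{q-1})$ only to within $o(n)$ of their targets, so the data fed into the phase-$q$ ODE is approximate rather than exact. Because the right-hand sides are Lipschitz and the drifts are uniformly bounded on $D_q$, a Gr\"onwall-type argument shows that an $o(n)$ perturbation of the initial time and state propagates to only an $o(n)$ perturbation of the trajectory throughout phase $q$. Adding this perturbation to the error produced directly by Wormald's theorem, and then taking the maximum of the resulting error functions across phases $1,\ldots,q$, gives a final $\eps_q(n) = o(1)$ valid for all $0 \le t \le \tau_q$, which completes the induction.
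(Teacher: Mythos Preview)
Your proposal is correct and follows exactly the approach the paper itself takes: the paper proves Proposition~\ref{prop:phase_q} simply by stating ``By applying the differential equation method inductively for each phase $q$'', and you have faithfully fleshed out that sketch (base case from Proposition~\ref{prop:phase_one}, drift computations via cases \ref{eqn:deterministic_case_1}--\ref{eqn:deterministic_case_4}, bounded one-step changes, Wormald's theorem on a suitable domain, and a Gr\"onwall-type handover of the $o(n)$ initial error between phases). Your identification of the phase-handover as the main technical point, and your treatment of it, are both sound and standard.
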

\begin{proposition}
If $k = 1100$, then \emph{a.a.s.}\ $\tau_{k} \le 1.20365$ and $X(\tau_{k}) \ge (1-10^{-6}) n$.
\end{proposition}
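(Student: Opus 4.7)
The proof follows almost immediately from Proposition \ref{prop:phase_q} applied with $q = k = 1100$, combined with a numerical verification. Specifically, that proposition yields that a.a.s.
\[
\tau_k \le c_k n + \eps_k(n) n \quad \text{and} \quad X(\tau_k) \ge x_k(c_k) n - \eps_k(n) n,
\]
where $\eps_k(n) = o(1)$. Hence it suffices to establish, with a strict positive margin, that $c_{1100} < 1.20365$ and $x_{1100}(c_{1100}) > 1 - 10^{-6}$; the $o(n)$ error is then absorbed for all sufficiently large $n$. No further probabilistic input is required.

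The verification I would perform is numerical and proceeds by iteratively integrating the phase differential equations. For $q = 1$, integrate the base system from the initial conditions $x_1(0) = y_1(0) = 0$ forward in time until the defining condition for $c_1$ is first met, and record the terminal values $x_1(c_1)$ and $y_1(c_1)$. Then, for each $q = 2, 3, \ldots, 1100$ in succession, integrate the phase-$q$ system from the initial conditions $x_q(c_{q-1}) = x_{q-1}(c_{q-1})$, $z_q(c_{q-1}) = y_{q-1}(c_{q-1})$, $y_q(c_{q-1}) = 0$, using the algebraic relation $z_q = (q-1)(1 - x_q - y_q/q)$, stopping at the first time $c_q$ where $z_q$ vanishes. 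The outputs $x_q(c_q)$ and $c_q$ feed into the initial data of phase $q+1$. At the end, compare $c_{1100}$ with $1.20365$ and $x_{1100}(c_{1100})$ with $1 - 10^{-6}$; this is exactly the computation carried out by the Maple worksheet cited earlier in the section.

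The main obstacle is not conceptual but computational: across $1100$ nested ODE solves, global numerical error can accumulate, so one must use a solver with tight local error control and sufficient working precision so that the verified strict inequalities survive the rigorous error bound. A secondary point to check at the analytic level is that each stopping time $c_q$ is well-defined for $q \le 1100$, i.e., that $z_q$ genuinely reaches $0$ in finite scaled time; this is guaranteed because within phase $q$ the total number of unsaturated vertices strictly decreases in expectation whenever a square lands on an unsaturated or red vertex, so the $(q-1)$-circle unsaturated vertices must be exhausted in finite time. Granting the numerical verification, the claimed bounds then follow at once from Proposition \ref{prop:phase_q}.
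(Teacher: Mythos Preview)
Your proposal is correct and matches the paper's approach exactly: the paper does not write out a separate proof for this proposition, but treats it as an immediate consequence of Proposition~\ref{prop:phase_q} combined with the numerical solution of the iterated phase ODEs (the Maple worksheet referenced in the footnote), just as you describe. Your added remarks on accumulated numerical error and on the well-definedness of each $c_q$ are sensible elaborations of points the paper leaves implicit.
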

After $k=1100$ phases, we have constructed a matching $M_{\tau_k}$ which \emph{a.a.s.}\ has at least $(1-10^{-6})n/2$ edges. 
To complete the perfect matching, we apply the 2-stage randomised algorithm from Section~\ref{warm-up-upper-bound}. First, we uncolour all vertices and edges of the graph. Then, we keep exactly $(1-10^{-6})n/2$ edges in $M_{\tau_k}$, and ``artificially'' delete the other edges in the matching, and put the incident vertices into $U_{\tau_k}$. Starting again from $t=0$, the argument
from Section \ref{warm-up-upper-bound} can be generalized to show that $X(t)$ and $R(t)$ follow the solutions of the differential equations~(\ref{x'}) and~(\ref{y'}) with initial conditions $x(0)=1-10^{-6}$ and $r(0)=0$. This is because
we can reproduce the analysis of the algorithm from Section \ref{warm-up-upper-bound} with $(1 - 10^{-6})n$ initial edges in the matching, opposed to none. Numerical solutions show that $x(0.00158)>1-\eps$, where $\eps=10^{-14}$. The clean-up
algorithm from Section ~\ref{warm-up-upper-bound} completes the construction in $100\sqrt{\eps} n =10^{-5} n$ rounds. This yields an upper bound of $1.20365+0.00158+10^{-5}=1.20524$ on $\tau_{\texttt{PM}}$, as in Theorem \ref{thm:main_upper_bound}.\qed


\section{Lower Bound}
Throughout the section, we will repeatedly apply the following claim when deriving bounds on $\tau_{\scr{P}}$
for a property $\scr{P}$:
\begin{proposition} \label{prop:equivalence}
Given $\alpha > 0$, suppose that for any strategy $\scr{S}$ and $0 < \delta < \alpha$, 
\[
    \mb{P}[G^{\scr{S}}_{T}(n) \in \scr{P}] \rightarrow 0
\]
as $n \rightarrow \infty$, where $T=T(n) = (\alpha-\delta) n$.
Then $\tau_{\scr{P}} \ge \alpha$.
\end{proposition}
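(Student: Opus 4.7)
The plan is to prove the proposition by contradiction: assuming $\tau_{\scr{P}} < \alpha$, I will construct a single strategy $\scr{S}$ (one sequence of functions for each $n$) that violates the hypothesis.

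First, I would set $\delta_0 := (\alpha - \tau_{\scr{P}})/2$, so that $0 < \delta_0 < \alpha$ and $\tau_{\scr{P}} < \alpha - \delta_0$. Since $q \mapsto \limsup_n \tau_{\scr{P}}(q,n)/n$ is non-decreasing with $q \to 1^-$ limit equal to $\tau_{\scr{P}}$, it follows that $\limsup_n \tau_{\scr{P}}(q,n)/n \le \tau_{\scr{P}} < \alpha - \delta_0$ for every $0 < q < 1$. Taking $q = 1/2$ produces an $N_0$ such that $\tau_{\scr{P}}(1/2,n) < (\alpha - \delta_0)\, n = T(n)$ for all $n \ge N_0$.

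Second, for each $n \ge N_0$, the definition of $\tau_{\scr{P}}(1/2, n)$ as an infimum over strategies on $[n]$ supplies a strategy $\scr{S}_n$ on $[n]$ with $\tau_{\scr{P}}(\scr{S}_n, 1/2, n) \le T(n)$. Because $\scr{P}$ is monotonely increasing and each round of the process only adds an edge, the map $t \mapsto \mb{P}[G_t^{\scr{S}_n}(n) \in \scr{P}]$ is non-decreasing, so $\mb{P}[G_{T(n)}^{\scr{S}_n}(n) \in \scr{P}] \ge 1/2$.

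Finally, I would splice these together: since a strategy in the paper's formalism is an $n$-indexed family of $(f_t)_{t \ge 1}$, I define $\scr{S}$ so that its restriction to $[n]$ coincides with $\scr{S}_n$ for $n \ge N_0$, choosing arbitrary behaviour for $n < N_0$. Then $\mb{P}[G_{T(n)}^{\scr{S}}(n) \in \scr{P}] \ge 1/2$ for all $n \ge N_0$, so this probability does not tend to $0$, contradicting the hypothesis applied with $\delta = \delta_0$. The argument is essentially a definition-unpacking and I do not anticipate a substantive obstacle; the only nuances worth stating carefully are that a strategy is an $n$-indexed family of functions (which legitimises the splicing in the last step) and that it is the monotonicity of $\scr{P}$, combined with the edge-additive nature of the process, that upgrades $\tau_{\scr{P}}(\scr{S}_n,1/2,n) \le T(n)$ into $\mb{P}[G_{T(n)}^{\scr{S}_n}(n) \in \scr{P}] \ge 1/2$.
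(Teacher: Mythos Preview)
Your proof is correct. The paper states this proposition without proof, treating it as an immediate consequence of the definitions of $\tau_{\scr{P}}(\scr{S},q,n)$, $\tau_{\scr{P}}(q,n)$, and $\tau_{\scr{P}}$; your argument is exactly the routine definition-unpacking that justifies it, and the two points you flag (that a strategy is an $n$-indexed family, and that monotonicity of $\scr{P}$ makes $t\mapsto \mb{P}[G_t\in\scr{P}]$ non-decreasing) are precisely the small subtleties that need to be checked.
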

\subsection{Warming-up}\label{sec:warm-up}

We start with the proof of Proposition~\ref{prop:warm_up_lowerbound} as a warm-up for our improved result. 
Suppose that we consider some strategy $\scr{S}$ of the player
which begins on the empty graph on vertex set $[n]$. Recall
that we say the vertex $j \in [n]$ is \textbf{covered} by the square $u_t$ arriving at round $t$,
provided $u_t = j$. The analogous definition extends to the circle $v_t$. 
Let $S_j(t)$ denote the number of squares covering $j$, and let $C_j(t)$ denote the number of circles covering $j$, after $t$ steps of the strategy. 

Suppose that $X(t)$ is the number of vertices of $G_t$
which are covered by at least one square. Let us fix
a constant $c < \ln(2)$, and consider $T=T(n) = cn$. 
Observe that if $G_t$ has a perfect matching
at time $t \ge 0$, then at least $n/2$ vertices of $[n]$ must have been covered
by at least one square; that is, $X(t) \ge n/2$. On the other hand,
$X(t) = n - \sum_{j \in [n]} \bm{1}_{\{S_j(t)=0\}}$,
and so 
\begin{eqnarray*}
    \mb{E}[X(t)] &=& \sum_{j\in[n]}(1 - \mb{P}[S_j(t)=0]) = n \left(1 -\left(1 - \frac{1}{n}\right)^{t} \right) \\
    &=& (1+o(1)) n (1 - \exp(-t/n)).
\end{eqnarray*}
Thus, combined with a second moment computation, one can conclude that for any fixed $t \ge 0$, \emph{a.a.s.}
\[
    X(t) = (1+o(1)) n (1 - \exp(-t/n)).
\]
Now, $T/n \le c$, and $c < \ln(2)$,
so \emph{a.a.s.}
\[
    X(T) \le n (1 + o(1)) (1 - \exp(-c)) < n/2.
\]
Therefore, \emph{a.a.s.}\ $G_T$ does not have a perfect matching. Since this holds for each $c < \ln(2)$,
Proposition \ref{prop:equivalence} implies the desired property and the proof is finished.

\subsection{Reducing to an Approximate Perfect Matching}


Given a constant $c > 0$ and a function $\omega = \omega(n)$, we say that a strategy $\scr{S}$ is $\omega$-\textbf{well-behaved}
for $cn$ rounds, if for all $0 \le t \le  \lfloor cn \rfloor$, each vertex of $G_{t}(n)$ is covered by at most $\omega$ circles.
In order to prove Theorem \ref{thm:main_lower_bound}, we work with strategies which are well-behaved
for an appropriate choice of $\omega$. We will also define a new property ${\mathcal P}$ which satisfies $\tau_{{\mathcal P}}\le \tau_{\texttt{PM}}$. Moreover, we will show that general strategies  do \textit{not} perform significantly better than well-behaved strategies for building a graph in ${\mathcal P}$. This allows us to restrict to analysing well-behaved strategies, and the various random variables yielded by these strategies. Since the strategies are well-behaved, the set of random variables under concern have the nice ``bounded-Lipschitz'' property that is essential for the application of the differential equation method. We start with the definition of the new property ${\mathcal P}$ on the family of directed graphs, and we use $D[S]$ to denote the directed graph induced by set $S \subseteq V(D)$ for a directed graph~$D$. 

\begin{definition}\label{def:approximate_pm}
Suppose that $D$ is a directed graph with $m$ arcs on vertex set $[n]$.
We say that $D$ has an $(\mu, \delta)$-\textbf{approximate perfect matching}, denoted by $D\in\texttt{PM}(\mu,\delta)$, if there exists a subset $S \subseteq [n]$
such that:
\begin{enumerate}[label=(\subscript{A}{{\arabic*}})]
    \item $D[S]$ has a perfect matching. \label{eqn:perfect_matching_subgraph} 
    \item All the vertices of $S$ have in-degree at most $\mu$ in $D$. \label{eqn:limited_in_degree}
    \item $|S| \ge n - 2 m \delta$. \label{eqn:large_subgraph}
\end{enumerate}
A perfect matching of $D[S]$ is then called a $(\mu,\delta)$-\textbf{approximate perfect matching} of~$D$.
\end{definition}
We first observe the following relation between $\texttt{PM}$ and $\texttt{PM}(\mu,\delta)$.

\begin{proposition}
For any function $\mu = \mu(n)$, it holds that
\[
    \tau_{\texttt{PM$(\mu, \mu^{-1})$}} \le \tau_{\texttt{PM}}.
\]

\end{proposition}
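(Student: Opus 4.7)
The plan is to reduce the proposition to a pointwise statement: for any strategy $\scr{S}$ and any time $t \ge 0$, if $G_{t}^{\scr{S}}(n)$ has a perfect matching (in the undirected sense), then it also admits an $(\mu,\mu^{-1})$-approximate perfect matching. Once this implication is established, we get $\mb{P}[G_{t}^{\scr{S}}(n) \in \texttt{PM}(\mu,\mu^{-1})] \ge \mb{P}[G_{t}^{\scr{S}}(n) \in \texttt{PM}]$ for every $\scr{S}$, $n$, and $t$. Hence $\tau_{\texttt{PM}(\mu,\mu^{-1})}(\scr{S},q,n) \le \tau_{\texttt{PM}}(\scr{S},q,n)$ for every $q \in (0,1)$, and taking the infimum over strategies followed by the limit in the definition of $\tau_{\scr{P}}$ yields the desired inequality.

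The core combinatorial argument is as follows. Fix $t$ and let $D = G_{t}^{\scr{S}}(n)$, viewed as a directed graph with $m = t$ arcs. Suppose $M$ is a perfect matching of $D$ (ignoring arc directions). Define the set of \emph{heavy} vertices
\[
B = \{ j \in [n] : \ideg_{D}(j) > \mu \}.
\]
Since $\sum_{j \in [n]} \ideg_{D}(j) = m$, a simple averaging argument yields $|B| \le m/\mu$. Now let $M'$ be the matching obtained from $M$ by deleting every edge incident to at least one vertex of $B$, and set $S = [n] \setminus N$, where $N$ is the set of vertices left unsaturated by $M'$. By construction $N \subseteq B \cup M(B)$, where $M(B)$ is the set of $M$-mates of $B$, so $|N| \le 2|B| \le 2m/\mu$. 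Thus:
\begin{enumerate}
\item[(A1)] $M'$ is a perfect matching of $D[S]$ (its edges lie in $D$ because $M \subseteq D$, and by construction both endpoints of every surviving edge lie in $S$);
\item[(A2)] every vertex of $S$ lies outside $B$ and hence has in-degree at most $\mu$ in $D$;
\item[(A3)] $|S| \ge n - |N| \ge n - 2m/\mu = n - 2m\cdot \mu^{-1}$.
\end{enumerate}
This is exactly the definition of a $(\mu,\mu^{-1})$-approximate perfect matching, establishing the pointwise implication.

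Two small bookkeeping points. First, since $\tau_{\texttt{PM}}$ is an asymptotic quantity and the paper already notes it suffices to take $n$ even, we may assume $|S|$ has the correct parity; in the odd case the convention that a perfect matching leaves one vertex unsaturated lets us simply leave the distinguished unsaturated vertex in $S$, which only increases $|S|$ and preserves (A1)--(A3). Second, we should check (or note) that $\texttt{PM}(\mu,\mu^{-1})$ is a monotonely increasing property so that $\tau_{\texttt{PM}(\mu,\mu^{-1})}$ is well-defined: adding a single arc increases $m$ by one and may force us to drop at most one pair from $S$, which is compatible with the slack $2/\mu$ gained on the right-hand side of (A3); iterating, the property persists under arc additions.

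I do not expect any serious obstacle here: the whole proposition is essentially a counting observation (total in-degree equals number of arcs) combined with the obvious fact that removing a small set of vertices and their mates from a perfect matching preserves a perfect matching on what remains. The only item that requires any care is the parity/monotonicity bookkeeping above.
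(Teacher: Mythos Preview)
Your core argument is correct and is exactly the paper's proof: your set $B$ is the paper's $\mathcal{V}_1$, your $N$ is the paper's $\mathcal{V}_1\cup\mathcal{V}_2$, and the averaging bound $|B|\le m/\mu$ together with removing $B$ and its $M$-mates is precisely what the paper does. The pointwise implication $\texttt{PM}\Rightarrow\texttt{PM}(\mu,\mu^{-1})$ is all that is needed, and your deduction of the inequality on $\tau$ from it is fine.

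However, your second bookkeeping point is both unnecessary and incorrect. It is unnecessary because the definition of $\tau_{\scr{P}}$ makes sense for any property $\scr{P}$ (the monotonicity of $\tau_{\scr{P}}(q,n)$ in $q$ follows directly from the definition and does not use monotonicity of $\scr{P}$), so the pointwise implication already yields $\tau_{\texttt{PM}(\mu,\mu^{-1})}\le\tau_{\texttt{PM}}$ without ever asking whether $\texttt{PM}(\mu,\mu^{-1})$ is monotone. It is incorrect because your accounting does not balance: adding one arc relaxes the right-hand side of (A3) by $2/\mu$, but dropping a pair costs $2$ vertices, so for $\mu>1$ the slack is insufficient. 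In fact $\texttt{PM}(\mu,\mu^{-1})$ is \emph{not} monotone in general: take $\mu=5$, $n=20$, let $D$ consist of $25$ arcs forming a $K_{5,5}$ between $\{1,\dots,5\}$ (each with in-degree $5$) and $\{6,\dots,10\}$, with $\{11,\dots,20\}$ isolated; then $S=\{1,\dots,10\}$ witnesses $D\in\texttt{PM}(5,1/5)$ with $|S|=10=n-2m/\mu$. Now add one arc into each of $1,\dots,5$ from $11,\dots,15$. In $D'$ the vertices $1,\dots,5$ have in-degree $6>\mu$, so any valid $S'$ avoids them; but $D'[\{6,\dots,20\}]$ has no edges, so no $S'$ of size $\ge n-2m'/\mu=8$ carries a perfect matching. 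Simply drop that paragraph.
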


\begin{proof}
Suppose that $D$ is a directed graph which has a perfect matching $M$. Let ${\mathcal V}_1$ be the set of vertices whose in-degree is greater than $\mu(n)$.  Let ${\mathcal V}_2$ be the set of isolated vertices obtained once vertices in ${\mathcal V}_1$ are removed from $M$. Clearly, $|{\mathcal V}_2|\le |{\mathcal V}_1|\le m/\mu$. 
Let $S=[n]\setminus ({\mathcal V}_1\cup {\mathcal V}_2)$.  Obviously, $D[S]$ has a perfect matching, and $|S|\ge n-|{\mathcal V}_1|-|{\mathcal V}_2|\ge n-2|{\mathcal V}_1|\ge n-2m\mu^{-1}$. It follows that it is at least as easy to create an approximate perfect matching than to create a perfect one. The proof is completed. 
\end{proof}



Next, we confirm that general strategies do not behave significantly better than well-behaved strategies for constructing a directed graph in $\texttt{PM$(\mu, \mu^{-1})$}$, where we view every edge in $G_t$ as an arc from the square to the circle. Moreover we may safely restrict to strategies that construct only simple graphs, as intuitively relocating $v_t$ when $u_tv_t$ is already an edge would perform no worse, if not better. In the next proposition, we obtain a strategy $\scr{S}_2$ from a given strategy $\scr{S}_1$ by copying the moves of $\scr{S}_1$ whenever the move does not create a multiple edge, or create a vertex with too many circles. Otherwise, $\scr{S}_2$ relocates the circle to a different vertex (for counting purpose we colour relocated circles blue). We show that $\scr{S}_2$ performs at least as well as $\scr{S}_1$ quantitatively.

\begin{proposition} \label{prop:strategy_reduction}
For any constant $0 < c < 1$ and function $\mu =\mu(n)=\omega(1)$, 
if $\scr{S}_1$ is a strategy of the player, then there exists another strategy
$\scr{S}_2$ which is $2\mu$-well-behaved for $cn$ rounds, and satisfies
\begin{equation*} 
    \mb{P}[\text{$G_{cn}^{\scr{S}_1}(n)$ satisfies $\texttt{PM$(\mu, \mu^{-1})$}$}] \le \mb{P}[\text{$G_{cn}^{\scr{S}_2}(n)$ satisfies $\texttt{PM$(2\mu, \mu^{-1})$}$}] .
\end{equation*}
Moreover, $\scr{S}_2$ does not create any multi-edges, and so $G_{cn}^{\scr{S}_2}(n)$ is a simple graph.
\end{proposition}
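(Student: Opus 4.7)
My plan is to construct $\scr{S}_2$ by running an internal simulation of $\scr{S}_1$ and copying its moves whenever doing so is ``safe.'' Specifically, at round $t$, once the square $u_t$ arrives, $\scr{S}_2$ queries the simulated copy of $\scr{S}_1$ for its intended circle $v_t^{(1)}$. $\scr{S}_2$ plays $v_t^{(2)} := v_t^{(1)}$ provided that $u_t v_t^{(1)}$ is not already an edge of $G^{\scr{S}_2}_{t-1}$ and the number of circles on $v_t^{(1)}$ in $G^{\scr{S}_2}_{t-1}$ is strictly below $2\mu$. Otherwise $\scr{S}_2$ relocates the circle to a vertex meeting both constraints (such a vertex exists, since the number of invalid ones is at most the out-degree of $u_t$ in $G^{\scr{S}_2}_{t-1}$ plus $\lfloor cn/(2\mu)\rfloor = o(n)$), and this relocated circle is coloured blue. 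By construction, $G^{\scr{S}_2}_{cn}$ is simple and $\scr{S}_2$ is $2\mu$-well-behaved.

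To compare probabilities, I would couple the two processes by feeding the same sequence of squares (and the same random bits of $\scr{S}_1$) to both, so that it suffices to argue deterministically that $G^{\scr{S}_1}_{cn} \in \texttt{PM}(\mu,\mu^{-1})$ implies $G^{\scr{S}_2}_{cn} \in \texttt{PM}(2\mu,\mu^{-1})$. Given a witness $S_1$ and perfect matching $M_1$ of $G^{\scr{S}_1}[S_1]$, the key structural observation is that any $v \in S_1$ has in-degree at most $\mu$ in $G^{\scr{S}_1}$, so $\scr{S}_2$'s mimicry places at most $\mu$ non-blue circles on $v$. Hence whenever a vertex of $S_1$ accumulates $2\mu$ circles in $G^{\scr{S}_2}$, it must have absorbed at least $\mu$ blue circles. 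Let $L$ be the set of all vertices whose blue-count is at least $\mu$; since the total number of blue circles equals the number of relocations, $|L|$ is controlled.

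Define $S_2 := S_1 \setminus (L \cup L')$, where $L'$ is the set of $M_1$-partners of $L \cap S_1$. For any $xy \in M_1$ with $x, y \in S_2$, the edge $xy$ must also lie in $G^{\scr{S}_2}_{cn}$: at the round when $\scr{S}_1$ placed $xy$, either $\scr{S}_2$ copied (so $xy$ is added), or $\scr{S}_2$ relocated, in which case the only two triggers are that $xy$ was already present in $G^{\scr{S}_2}$ (still fine), or that the target was saturated in $G^{\scr{S}_2}$, which by the central observation would force the target into $L$ and hence out of $S_2$, a contradiction. Thus $\{xy \in M_1 : x, y \in S_2\}$ is a perfect matching inside $G^{\scr{S}_2}[S_2]$, and the in-degree condition is automatic from well-behavedness.

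The main obstacle is sharpening the size estimate to $|S_2| \ge n - 2 m_2 \mu^{-1}$, where $m_2$ is the arc count of $G^{\scr{S}_2}_{cn}$. The crude bound $|S_2| \ge |S_1| - 2|L|$ combined with $|L| \le cn/\mu$ is not quite sufficient on its own, and must be refined by relating the relocation count back to the slack $m_2 - m_1$: every ``multi-edge triggered'' relocation of $\scr{S}_2$ is provoked by an arc that $\scr{S}_2$ has already placed, and one can charge these via an injection to rounds at which $\scr{S}_1$ also created a multi-arc. Combined with choosing $\scr{S}_2$'s relocation rule so as to distribute blue circles over as many vertices as possible (e.g., always picking the valid target with the fewest prior blue circles), this controls $|L|$ tightly enough to complete the bookkeeping. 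This final accounting step is the delicate part of the proof.
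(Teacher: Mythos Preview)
Your coupling-and-stealing framework is exactly the paper's approach, and the first two paragraphs are fine. The gap is in the final ``delicate bookkeeping,'' and your proposed repairs do not work as stated. First, there is no slack $m_2 - m_1$ to exploit: both strategies place exactly one arc per round, so $m_1 = m_2 = cn$. Second, the injection to multi-arcs of $\scr{S}_1$ only addresses multi-edge--triggered relocations; it says nothing about circle-threshold--triggered relocations, which can be arbitrarily many if $\scr{S}_1$ piles circles on a few vertices.

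The fix is much simpler than you suggest, and it removes the need to shrink $S_1$ at all. Two changes suffice. First, adopt the spreading rule you mention as a refinement from the outset: when relocating, always choose a non-neighbour of $u_t$ carrying the fewest blue circles. An averaging argument (at most $cn$ blue circles spread over at least $(1-c)n$ non-neighbours) then caps the blue count on every vertex by $c/(1-c)+1 = O(1)$, which is $o(\mu)$; in particular your set $L$ is empty for large $n$. Second, and this is the cleaner trick the paper uses, trigger the circle-based relocation by the count in $G^{\scr{S}_1}$ at threshold $\mu$, not in $G^{\scr{S}_2}$ at threshold $2\mu$. With that convention, any $v\in S_1$ has in-degree at most $\mu$ in $G^{\scr{S}_1}_{cn}$, so the circle condition never blocks copying an arc into $v$; the only possible block is the multi-edge condition, in which case the edge is already present in $G^{\scr{S}_2}$. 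Hence every edge of the matching $M_1$ lies in $G^{\scr{S}_2}[S_1]$, the in-degree bound $2\mu$ is automatic from well-behavedness, and $|S_1|\ge n-2cn/\mu = n - 2m_2/\mu$ already. No shrinking, no charging argument.
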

\begin{proof}
In order to prove the claim,
we couple the execution of $\scr{S}_1$
with another strategy $\scr{S}_2$ which is
$2\mu$-well-behaved, and which constructs a directed graph in  
$\texttt{PM$(2\mu, \mu^{-1})$}$ no later than
when $\scr{S}_1$ 
constructs one in $\texttt{PM$(\mu, \mu^{-1})$}$.
We define $\scr{S}_2$ by \textit{stealing}
the strategy of $\scr{S}_1$, while making a slight
modification to avoid multi-edges and to ensure that no vertex of $[n]$ is covered
by more than $2\mu$ circles.

Suppose that the squares presented to strategy $\scr{S}_1$
are $u_{1}, \ldots , u_{cn}$, and whose corresponding
circles are $v_{1}^{1},\ldots ,v^{1}_{cn}$. We denote
the graph formed by $\scr{S}_1$ after $0 \le t \le cn$ rounds
by $G_{t}^{1}$. Let us define another strategy $\scr{S}_2$ with the same set of squares as presented to $\scr{S}_1$, whose
circles we denote by $v_{1}^{2},\ldots ,v_{cn}^{2}$.
 For each
$1 \le t \le cn$, suppose that $u_t$ is presented to $\scr{S}_2$, and
$G_{t-1}^{2}$ is the current graph constructed by $\scr{S}_2$ (where $G_0^2$
is the empty graph):
\begin{enumerate}
    \item If adding $u_{t}v_{t}^{1}$ to $G_{t-1}^{1}$ 
    does \textit{not} cause the number of
    circles on $v_{t}^{1}$ to exceed
    $\mu$, and $u_tv_t^{1}$ is \textit{not} an edge in $G_{t-1}^{2}$ (note here that we consider $G_{t-1}^{2}$ as an undirected graph),
    then set $v_{t}^{2}:= v_{t}^{1}$ \label{eqn:easy_round}.
    \item Otherwise, choose $v_{t}^{2} \in [n] \setminus N_{G_{t-1}^{2}}(u_t)$
    amongst those vertices which have at most $c/(1-c)$ blue circles, and then
    colour the circle $v_{t}^2$ blue.\label{eqn:problematic_round}
    \item Add $(u_t, v_{t}^{2})$ to $G_{t-1}^{2}$ to get $G_{t}^2$.
\end{enumerate}
We first observe that the strategy $\scr{S}_2$ is well-defined in that there always
exists a choice of $v_{t}^{2}$ which satisfies the requirements of $\eqref{eqn:problematic_round}$.
To see this, notice that trivially $|N_{G_{t-1}^{2}}(u_t)| \le c n$, and so there are at least
$(1 - c) n> 0$ vertices which are not adjacent to $u_t$ in $G_{t-1}^{2}$, as $c <1$ by assumption. Thus, since there
are at most $cn$ blue circles, a simple averaging argument ensures that there exists a vertex
of $[n] \setminus N_{G_{t-1}^{2}}(u_t)$ which is covered by at most $c/(1-c)$ blue circles.

We now argue that the strategy $\scr{S}_2$ satisfies all the desired properties of Proposition \ref{prop:strategy_reduction}. It is clear
that $\scr{S}_2$ does not create any multi-edges by definition, so it suffices to verify that
each vertex of $G_{cn}^{2}$ is covered by at most $2\mu$ circles, and that $G_{cn}^{2}$ is
in \texttt{PM$(2\mu, \mu^{-1})$} if $G_{cn}^{1}$ is in \texttt{PM$(\mu, \mu^{-1})$} . Observe that if we fix $j \in [n]$, then step \eqref{eqn:easy_round} places at 
most $\mu$ (uncoloured) circles on $j$, and step \eqref{eqn:problematic_round} places at most
$c/(1-c) +1<\mu$ blue vertices of $j$. Thus, in total there are at most $2\mu$ circles are placed on $j$,
and so the strategy $\scr{S}_2$ is $2\mu$-well-behaved. As a result,
if $G_{cn}^{1}$ has a $(\mu,\mu^{-1})$-approximate perfect matching, then $G_{cn}^{2}$
must have a $(2\mu, \mu^{-1})$-approximate perfect matching.
Thus, 
\[
\mb{P}[\text{$G_{cn}^{1}$ satisfies $\texttt{PM$(\mu, \mu^{-1})$}$}] \le \mb{P}[\text{$G_{cn}^{2}$ satisfies $\texttt{PM$(2\mu, \mu^{-1})$}]$}.
\]
and so the proof is complete.
\end{proof}

By combining the above propositions with Proposition \ref{prop:equivalence}, we get the following
lemma:

\begin{lemma}\label{lem:light_matching}
Given $0 < \alpha \le 1$ and $\mu = \mu(n) = \omega(1)$, suppose that for each $c < \alpha$ and each strategy $\scr{S}$
which is $2\mu$-well behaved for $cn$ rounds, it holds that
\[
    \mb{P}[\text{$G_{cn}^{\scr{S}}(n)$ satisfies $\texttt{PM$(2\mu, \mu^{-1})$}$}]  \rightarrow 0, \quad \mbox{as $n \rightarrow \infty$.}
\]
Then, $\tau_{\texttt{PM}} \ge \alpha$.

\end{lemma}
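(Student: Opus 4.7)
The plan is to chain together the three auxiliary results already established: Proposition \ref{prop:equivalence}, the inequality $\tau_{\texttt{PM}(\mu,\mu^{-1})} \le \tau_{\texttt{PM}}$, and Proposition \ref{prop:strategy_reduction}. First I would appeal to Proposition \ref{prop:equivalence} to recast the goal: in order to conclude $\tau_{\texttt{PM}} \ge \alpha$, it suffices to fix an arbitrary $c < \alpha$ and an arbitrary strategy $\scr{S}_1$, and then show that $\mb{P}[G_{cn}^{\scr{S}_1}(n) \in \texttt{PM}] \to 0$ as $n \to \infty$. This is the natural point of entry, since the hypothesis of the lemma quantifies over well-behaved strategies and a \emph{different} property $\texttt{PM}(2\mu,\mu^{-1})$, so two reductions will be needed.

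Next I would use the implication that a perfect matching is automatically a $(\mu,\mu^{-1})$-approximate perfect matching (this is the content of the proposition preceding Proposition \ref{prop:strategy_reduction}). This immediately gives
\[
\mb{P}[G_{cn}^{\scr{S}_1}(n) \in \texttt{PM}] \le \mb{P}[G_{cn}^{\scr{S}_1}(n) \in \texttt{PM}(\mu,\mu^{-1})].
\]
Then I would invoke Proposition \ref{prop:strategy_reduction} (valid since $c < \alpha \le 1$ and $\mu = \omega(1)$), which furnishes a $2\mu$-well-behaved strategy $\scr{S}_2$ for $cn$ rounds such that
\[
\mb{P}[G_{cn}^{\scr{S}_1}(n) \in \texttt{PM}(\mu,\mu^{-1})] \le \mb{P}[G_{cn}^{\scr{S}_2}(n) \in \texttt{PM}(2\mu,\mu^{-1})].
\]

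Now the hypothesis of the lemma applies directly to $\scr{S}_2$: since $\scr{S}_2$ is $2\mu$-well-behaved for $cn$ rounds and $c<\alpha$, the right-hand side tends to $0$ as $n\to\infty$. Chaining the two inequalities above, we obtain $\mb{P}[G_{cn}^{\scr{S}_1}(n)\in\texttt{PM}]\to 0$. Since $c < \alpha$ and $\scr{S}_1$ were arbitrary, Proposition \ref{prop:equivalence} (applied with $\scr{P} = \texttt{PM}$, where $\delta = \alpha-c$ ranges over $(0,\alpha)$) yields $\tau_{\texttt{PM}} \ge \alpha$, completing the proof. There is no real obstacle in this argument; it is purely a bookkeeping step that funnels an arbitrary strategy into a well-behaved one so the assumed concentration/vanishing-probability statement may be applied, and its role is to set up the main analytic work of the next subsections, where the hypothesis is actually verified via the differential equation method for the explicit $\alpha$ in Theorem \ref{thm:main_lower_bound}.
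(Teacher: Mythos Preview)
Your proposal is correct and follows exactly the route the paper intends: the lemma is stated as an immediate consequence of combining Proposition~\ref{prop:equivalence}, the proposition that $\texttt{PM}$ implies $\texttt{PM}(\mu,\mu^{-1})$, and Proposition~\ref{prop:strategy_reduction}, and your chaining of these three results is precisely that combination.
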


\subsection{Proving Theorem \ref{thm:main_lower_bound}}

In this section, we complete the proof of Theorem \ref{thm:main_lower_bound}.
Recall that $g: [0, \infty) \rightarrow \mb{R}$ is defined such that for $b \ge 0$,
\begin{equation} \label{eqn:first_bound}
    g(b) := 1 + \frac {1-2b}{2} \exp(-b) - (b+1) \exp(-2b) - \frac {1}{2} \exp(-3b),
\end{equation}
and in viewing $g$ as monotonely increasing on $[0, \infty)$, we have that
\[
\alpha =\min\{\alpha \ge 0: g(b)\ge 1/2 \}.
\]
Let $\mu=\sqrt{n}$ (indeed any function of $n$ that grows with $n$ in a sub-linear rate would work) and let $0<c<\alpha \le 1$ be an arbitrary constant. Define $\omega$ to be $2\mu$. This specifies the parameters for the set of $\omega$-well behaved strategies for $cn$ rounds that are considered below.

Suppose that $\scr{S}$ is an $\omega$-well behaved strategy
for $cn$ rounds. Since $c < \alpha \le 1$, we may apply
Lemma \ref{lem:light_matching} in order to prove Theorem \ref{thm:main_lower_bound}.
Thus, it suffices to show that
\begin{equation}\label{eqn:well_behaved_strategy}
    \mb{P}[\text{$G_{cn}^{\scr{S}}(n)$ satisfies $\texttt{PM$(\omega, \mu^{-1})$}$}]  \rightarrow 0,\quad\mbox{as $n \rightarrow \infty$}.
\end{equation}

For simplicity and without confusion, we remove $\scr{S}$ from the superscript.
In order to prove \eqref{eqn:well_behaved_strategy}, we prove that a certain set of  squares cannot be simultaneously contained in any $(\omega, \mu^{-1})$-approximate perfect matching of $G_{cn}$ and there are many such squares to exclude.

Given a vertex $j \in [n]$, we say that $j$ is \textbf{redundant} at time $t \ge 0$, provided the following conditions hold:
\begin{enumerate}[label=(\subscript{B}{{\arabic*}})]
    \item Vertex $j$ is covered by precisely one square, say $u_s$ for $s \le t$.
    \item The circle $v_{s}$ connected to $u_s$ by the player is covered by at least one
    square, which arrives after round $s$. \label{eqn:strategy_independece}
\end{enumerate}

We denote by $U(t)$ the number of redundant vertices at time $t$. Observe that \ref{eqn:strategy_independece} ensures
that $U(t)$ depends only on the placement of the squares, not the strategy $\scr{S}$. Observe also that the redundant square on $j$ and any square on $v_s$ cannot be contained simultaneously in any $(\omega, \mu^{-1})$-approximate perfect matching. Recall that $X(t)$ denotes the number of vertices covered by at least one square after step $t$.

Suppose that $j$ is redundant at time $t$ thanks to the arrival of the square $u_s$ at time $s \le t$.
We then say that $j$ is \textbf{well-positioned}, 
provided the vertex $v_s$ is also redundant at time $t$. Denote $W(t)$
as the number of well-positioned redundant vertices at time $t$. Clearly,
$W(t) \le U(t)$ by definition. We observe then the following inequality:

\begin{proposition} \label{prop:first_improvement}
If  $G_{t}$ has an $(\omega, \mu^{-1})$-approximate perfect matching at time $t \ge 0$,
then
\begin{equation}
    X(t) - U(t) + W(t) \ge \frac{n}{2} - \frac{3 t}{\mu}. \label{lowerbound1}
\end{equation}
\end{proposition}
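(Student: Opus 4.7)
The plan is to fix an $(\omega, \mu^{-1})$-approximate perfect matching $M$ of $G_t$ on some subset $S \subseteq [n]$ with $|S| \ge n - 2m/\mu \ge n - 2t/\mu$, and to orient each edge of $M$ using the underlying arc $(u_s, v_s)$ that created it. This naturally partitions $S$ into the set $A$ of square-ends and the set $B$ of circle-ends of arcs in $M$, with $|A| = |B| = |S|/2 \ge n/2 - t/\mu$. Since every $u \in A$ is covered by at least one square (namely the one used in its matching arc), we have $A \subseteq X(t)$, so in particular $X(t) \ge n/2 - t/\mu$.

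The crux of the argument is to show that the number of vertices of $A$ that are redundant but not well-positioned is small; concretely, that $|A \cap (U(t) \setminus W(t))| \le 2t/\mu$. For any such $j$, letting $s = s(j)$ be the unique round at which the square $u_s = j$ arrives, the arc $(j, v_s)$ lies in $M$, so $v_s \in B$. The non-redundancy of $v_s$ splits into two cases: either $v_s$ carries at least two squares, or $v_s$ has a unique square $u_{s'}$ (necessarily with $s' > s$) whose circle partner $v_{s'}$ has no square arriving after round $s'$. The map $j \mapsto v_s$ is injective on $A \cap (U \setminus W)$ by the matching property. I would then set up a charging scheme in each case---using either a wasted extra square at $v_s$, or following the dead-end chain $j \to v_s \to v_{s'}$ past $v_s$---to route each such bad $j$ to a distinct witness lying in $[n] \setminus S$. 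Since $|[n] \setminus S| \le 2t/\mu$, this yields the desired bound.

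To conclude, use the set identity $|X \setminus (U \setminus W)| = X - U + W$ (valid because $W \subseteq U \subseteq X$), together with the inclusion $A \setminus (U \setminus W) \subseteq X \setminus (U \setminus W)$, to obtain
\[
X(t) - U(t) + W(t) \ge |A \setminus (U \setminus W)| \ge |A| - |A \cap (U \setminus W)| \ge \frac{n}{2} - \frac{t}{\mu} - \frac{2t}{\mu} = \frac{n}{2} - \frac{3t}{\mu}.
\]

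The main obstacle is designing the charging scheme that delivers $|A \cap (U \setminus W)| \le 2t/\mu$. The naive injection $j \mapsto v_s$ only lands in $B \cap (X \setminus U)$, which can be as large as $|B|$; hence some finer structural information must be exploited, either via the $\omega$-well-behavedness bound on in-degrees, or by chasing the dead-end chain in the second case through vertices that must eventually leave $S$. Navigating this case analysis carefully enough to route each $j$ to a distinct element of $[n] \setminus S$ (with bounded multiplicity) is the combinatorial heart of the proof.
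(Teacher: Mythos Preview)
Your overall target is right, but the key intermediate claim $|A \cap (U(t) \setminus W(t))| \le 2t/\mu$ is simply false, so no charging scheme can deliver it. Consider the following instance on $[n]$ with $n$ even and $t = n$: in round $i$ (for $1 \le i \le n/2$) set $u_i = 2i-1$, $v_i = 2i$; in round $n/2+i$ set $u_{n/2+i} = 2i$, $v_{n/2+i} = 2i+1$ (with $v_n = 1$). There are no multi-edges and every vertex carries exactly one circle, so the instance is $\omega$-well-behaved for any $\omega \ge 1$. The first $n/2$ rounds already give a perfect matching on $S = [n]$, with $A = \{1,3,\ldots,n-1\}$. Each odd vertex $2i-1$ is redundant (its circle partner $2i$ receives a square at round $n/2+i$) but not well-positioned: the unique square on $2i$ sends its circle to $2i+1$, whose only square arrived at round $i+1 \le n/2 < n/2+i$, so $2i$ is not redundant. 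Hence $A \cap (U \setminus W) = A$ has size $n/2 \gg 2t/\mu$, and since $[n] \setminus S = \emptyset$ there is nothing to charge to. The proposition's conclusion still holds here ($X - U + W = n - n/2 + 0 = n/2$), but your route to it does not.

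The loss occurs at the step $X - U + W \ge |A \setminus (U \setminus W)|$: by restricting to square-ends you discard the contribution of $B$ entirely. The paper bounds $|\mathcal{M}|$ directly instead. Writing $\mathcal{X}_S, \mathcal{U}_S, \mathcal{W}_S$ for the square-covered, redundant, and well-positioned vertices inside $S$, and $\mathcal{Z}_S := S \setminus \mathcal{X}_S$, one checks that every matching edge meets $(\mathcal{X}_S \setminus \mathcal{U}_S) \cup \mathcal{W}_S$: no edge lies within $\mathcal{Z}_S$ (one end must carry a square); no edge joins $\mathcal{U}_S$ to $\mathcal{Z}_S$ (a redundant vertex's sole out-arc, and all its in-arcs, go to squared vertices); and no edge lies within $\mathcal{U}_S \setminus \mathcal{W}_S$ (its square-end would have a redundant circle-partner and hence be well-positioned). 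Therefore $|\mathcal{M}| \le (X_S - U_S) + W_S$. Combining with $X_S \le X(t)$, $W_S \le W(t)$, $U_S \ge U(t) - |[n]\setminus S| \ge U(t) - 2t/\mu$, and $|\mathcal{M}| \ge n/2 - t/\mu$ gives the claimed bound.
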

\begin{proof}
Since $G_t$ has an $(\omega, \mu^{-1})$-approximate perfect matching, there exists some subset
$S \subseteq [n]$, such that $|S| \ge n - 2t /\mu$, for which
$G_{t}[S]$ has a perfect matching. Now, if $\scr{M}$ is a perfect matching
of $G_{t}[S]$, then clearly $|\scr{M}| = |S|/2 \ge n/2 - t/\mu$.

On the other hand, suppose $\scr{X}_{S}$ is the collection of vertices within
$S$ covered by at least one square, $\scr{U}_{S}$ is the collection  of
vertices of $S$ which are redundant, and $\scr{W}_{S}$
is the number of well-positioned vertices of $G_{t}[S]$.
Let us denote $X_{S}:= |\scr{X}_{S}|$, $U_{S}:= |\scr{U}_{S}|$ and $W_{S}:=|\scr{W}_S|$ for convenience.
We claim the following inequality:
\begin{equation} \label{eqn:subgraph_perfect_matching}
    X_{S} - U_{S} + W_{S}  \ge |\scr{M}| \ge n/2 - t/\mu.
\end{equation}
To see the left-hand side of this inequality, we first partition $S$
into $\scr{X}_{S} \setminus \scr{U}_{S}$, $\scr{U}_{S}$
and $\scr{Z}_{S}:= S \setminus \scr{X}_{S}$, thus ensuring that
\begin{equation} \label{eqn:perfect_matching_decomposition}
    |\scr{M}| = e_{\scr{M}}(\scr{X}_{S} \setminus \scr{U}_{S}, S) + e_{\scr{M}}(\scr{U}_S, \scr{U}_S) + e_{\scr{M}}(\scr{U}_S, \scr{Z}_{S}).
\end{equation}
Now, clearly $e_{\scr{M}}(\scr{X}_{S} \setminus \scr{U}_{S}, S) \le |\scr{X}_{S} \setminus \scr{U}_{S}|$.
On the other hand, we claim that $e_{\scr{M}}(\scr{U}_S, \scr{Z}_{S}) = 0$. This is because each vertex of $\scr{U}_S$
is redundant, and so its single out-neighbour is a vertex covered by at least one square, and thus 
cannot be a member of $\scr{Z}_S$. Similarly, its in-neighbours are covered by squares by definition,
and thus are also not in $\scr{Z}_S$.

It remains to upper bound $e_{\scr{M}}(\scr{U}_S, \scr{U}_S)$. In order
to do so, we partition $\scr{U}_S$
into $\scr{U}_{S} \setminus \scr{W}_{S}$ and $\scr{W}_S$, which ensures that
\[
    e_{\scr{M}}(\scr{U}_S, \scr{U}_S) = e_{\scr{M}}(\scr{U}_{S} \setminus \scr{W}_{S}, \scr{U}_{S} \setminus \scr{W}_{S}) + e_{\scr{M}}(\scr{W}_S,\scr{U}_S).
\]
Now, by definition each vertex $j \in \scr{U}_{S} \setminus \scr{W}_S$
has precisely one square on it, and the circle corresponding to this square lies in $\scr{X}_{S} \setminus \scr{U}_S$.
Thus, $e_{\scr{M}}(\scr{U}_{S} \setminus \scr{W}_S) = 0$. Moreover,
we can upper bound $e(\scr{W}_S, \scr{U}_S)$ by $|\scr{W}_S|$. 
By applying these bounds to \eqref{eqn:perfect_matching_decomposition},
\eqref{eqn:subgraph_perfect_matching} follows.

We can complete the proof by observing
that $X_{S} \le X(t)$, $W_{S} \le W(t)$, and $U_{S} \ge U(t) - (n -|S|) \ge U(t) - t/\mu$.
Thus, after rearranging \eqref{eqn:subgraph_perfect_matching}, we get that
\[
    X(t) - U(t) + W(t) \ge \frac{n}{2} -  \frac{3 t}{\mu},
\]
as claimed.
\end{proof}
As we saw in Proposition \ref{prop:warm_up_lowerbound}, it is easy to control the behaviour of
$X(t)$ for $t \ge 0$. Controlling the behaviour of $U(t)$ requires more care, and so
it is useful to define $Y(t)$ as the number of vertices of $G_t$ which are covered
by precisely one square. 
Let $H_{t}=(G_0,\ldots, G_t)$ be the history of the random graph process
up to step $t$. We then have that for each $t \ge 1$:
\begin{equation} \label{eqn:square_covered_vertices}
    \mb{E}[X(t+1) - X(t) \, | \, H_{t}]  = 1 - \frac{X(t)}{n},
\end{equation}
\begin{equation} \label{eqn:singularly_square_covered_vertices}
    \mb{E}[Y(t+1) - Y(t) \, | \, H_t] = \frac{n - X(t)}{n} - \frac{Y(t)}{n},
\end{equation}
and
\begin{equation} \label{eqn:redundant_vertices}
    \mb{E}[U(t+1) - U(t) \, |  \, H_{t}] = \frac{Y(t) - U(t)}{n} - \frac{U(t)}{n}.
\end{equation}
Verification of~(\ref{eqn:square_covered_vertices})--(\ref{eqn:redundant_vertices}) is straightforward. We briefly verify~(\ref{eqn:redundant_vertices}). The number of pairs of vertices $(u,v)$ such that $u$ is covered by exactly one square whose corresponding circle is on $v$ and $v$ has no squares arriving after that circle, is exactly $Y(t)-U(t)$ by step $t$. A redundant vertex is created if a square lands on $v$. On the other hand, a redundant vertex can be destroyed if it receives another square. This explains~(\ref{eqn:redundant_vertices}).

In order to control the behaviour of $W(t)$, we introduce another definition.
Let us say that a vertex $j_1$ is \textbf{dangerous} at time $t \ge 0$,
provided there exist times $t_{1} < t_{2} \le t$, such that the following hold:
\begin{enumerate}[label=(\subscript{C}{{\arabic*}})]
    \item At time $t_1$, the square $u_{t_1}$ lands on $j_1$, and connects its circle $v_{t_1}$ to some
    vertex $j_2 \neq j_1$.
    \item At time $t_2$, the vertex $j_2$ is covered by the square $u_{t_2}$, at which point
    the circle $v_{j_2}$ is placed on some vertex $j_{3} \notin \{j_1,j_{2}\}$.
    \item No squares land on $j_3$ between times $t_2$ and $t$, and
    $u_{t_1}$ and $u_{t_2}$ are the only squares on $j_1$ and $j_2$, respectively.
\end{enumerate}
Let us  denote $D(t)$ as the number of dangerous vertices at time $t$. Observe then
that
\begin{equation} \label{eqn:dangerous_vertices}
    \mb{E}[ D(t+1) - D(t) \, | \, H_{t}] = \frac{Y(t) - U(t)}{n} - \frac{3 D(t)}{n},
\end{equation}
where we recall that $Y(t)$ is the number of vertices covered by exactly one square,
and $U(t)$ is the number of redundant vertices. Note that the equality in \eqref{eqn:dangerous_vertices}
follows since the strategy we are analyzing does \textit{not} create multi-edges by assumption.
We can use $D(t)$ to help us track the behaviour of $W(t)$ at time $t$;
that is, the well-positioned vertices. Specifically, observe that
\begin{equation*} \label{eqn:well_positioned_vertices}
    \mb{E}[ W(t+1) - W(t) \, | \, H_t] =\frac{D(t)}{n} - \frac{2 W(t)}{n}.
\end{equation*}

Now, by assumption $\scr{S}$ is $\omega$-well behaved---that is,
there are at most $\omega = \omega(n)$ circles on any vertex of $G_t$. We can thus bound 
the worst case one-step changes of our random 
variables by $\omega(n)$. More explicitly, observe that for each $t \ge 0$ 
\begin{equation*}
    |X(t+1) - X(t)| \le \omega(n),
\end{equation*}
and the analogous upper bound is also true for the other random variables.

Suppose now that the functions $x,y,u,d,w: [0, \infty) \rightarrow \mb{R}$ are the unique solution to
the following system of differential equations:
\begin{eqnarray*}
    x' &=& 1 - x, \\
    y' &=& 1 - x - y, \\
    u' &=& y - 2 u, \\
    d' &=& y - u - 3 d,\\
    w' &=& d -2 w,
\end{eqnarray*}
with initial conditions $x(0) = y(0)= u(0)= d(0) =w(0)= 0$.
These functions have the following closed form solution:
\begin{itemize}
    \item $x(b) = 1 - \exp(-b)$,
    \item $y(b) = b \exp(-b)$,
    \item $u(b) = (b-1) \exp(-b) + \exp(-2b)$.
    \item $d(b) = \frac{1}{2} \exp(-b) + \frac{1}{2} \exp(-3b) -\exp(-2b)$,
    \item $w(b) = \frac{1}{2} \exp(-b) - b \exp(-2b) - \frac{1}{2} \exp(-3b)$.
\end{itemize}

The following lemma follows immediately by Nick Wormald's differential equation method (see, for example,~\cite[Theorem 5.1]{de} or~\cite{warnke2019wormalds}).

\begin{lemma} \label{lem:differential_equation_application} There exists a function  $\eps=\eps(n) = o(1)$
such that \emph{a.a.s.}, for all $0 \le t \le cn$,
\begin{equation*}
    \max\{|X(t) -x(t/n) n|,|U(t) - u(t/n) n| , |W(t) - w(t/n) n|\} \le \eps n.
\end{equation*}
\end{lemma}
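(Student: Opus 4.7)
The plan is to apply Wormald's differential equation method (Theorem~5.1 of~\cite{de} or the version in~\cite{warnke2019wormalds}) to the joint random process $(X(t), Y(t), U(t), D(t), W(t))_{t \ge 0}$. The trend hypothesis is essentially in hand: equations~\eqref{eqn:square_covered_vertices}--\eqref{eqn:dangerous_vertices} together with the analogous expected change for $W$ assert that for each $V \in \{X, Y, U, D, W\}$, $\mb{E}[\Delta V \mid H_t]$ matches the right-hand side of the corresponding ODE evaluated at the rescaled process, up to an $O(1/n)$ additive error absorbable into the trend-error parameter of the theorem. Both the process and the ODE start at zero.

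Next I would work inside the open domain $\mathcal{D} = \{(s, x, y, u, d, w) : -\eta < s < c + \eta, \, |x|, |y|, |u|, |d|, |w| < 2\}$ for a small fixed $\eta > 0$; this is a neighborhood of the trajectory of the ODE system on $[0, c]$. On $\mathcal{D}$ the right-hand sides are affine in $(x, y, u, d, w)$ and hence trivially Lipschitz, so the five closed-form expressions listed in the text are the unique solutions on $[0, c]$, as verified by direct differentiation.

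The remaining hypothesis is a bound on the one-step differences, and here the $\omega$-well-behaved assumption is essential. Trivially $|\Delta X|, |\Delta Y| \le 2$. For $U$, a new square at a vertex $j$ can only change the redundancy status of $j$ itself and of the (at most $\omega$) sources of circles with target $j$, giving $|\Delta U| \le \omega + 1$. The more delicate bounds are for $D$ and $W$: a new square at $j$ may simultaneously destroy every dangerous (respectively well-positioned) chain $(j_1, j_2, j_3)$ with $j_3 = j$, and this count is at most $\omega \cdot \omega = \omega^2$, since there are $\omega$ choices of $j_2$ (one per circle on $j$) and $\omega$ choices of $j_1$ per $j_2$ (one per circle on $j_2$). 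Thus $|\Delta D|, |\Delta W| = O(\omega^2)$.

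The main obstacle is that, with $\omega = 2\sqrt{n}$, the deterministic bound $\beta = O(\omega^2) = O(n)$ is too weak for a direct Azuma-type application to give deviations of size $o(n)$. I would resolve this either by (i) invoking a refined version of the differential equation method (see~\cite{warnke2019wormalds}) after bounding the conditional variance of $\Delta D$ and $\Delta W$ by $O(\omega^2)$ per step --- the double-counting $\sum_{j} (N_{j}^{-})^{2} \le \omega^{2} \cdot \sum_{j} N_{j}^{-} = O(\omega^{2} n)$ shows the average over $j$ of $(N_j^-)^2$ is $O(\omega^2)$, and Freedman's inequality then yields the desired martingale concentration --- or by (ii) observing that the preceding reductions permit any $\mu = \mu(n) \to \infty$, so choosing instead $\mu = \log n$ makes $\omega^{2} = O(\log^{2} n)$ and the standard form of Wormald's theorem immediately delivers $\eps(n) = O(n^{-1/3})$, which is $o(1)$ as required.
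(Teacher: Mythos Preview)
Your approach is the paper's approach: verify the trend hypotheses via \eqref{eqn:square_covered_vertices}--\eqref{eqn:dangerous_vertices} and the analogous equation for $W$, check the Lipschitz/open-domain conditions (trivial, since the right-hand sides are affine), and then invoke Wormald's theorem. The paper compresses all of this into the single sentence ``follows immediately by Nick Wormald's differential equation method.''

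You are, in fact, more careful than the paper on the boundedness hypothesis. The paper asserts that $|\Delta V|\le \omega$ for each of the five variables, but your observation is correct: a square landing on a vertex playing the role of $j_3$ can simultaneously destroy every dangerous chain terminating there, and there can be as many as $\omega$ choices of $j_2$ times $\omega$ choices of $j_1$, so the honest deterministic bound on $|\Delta D|$ and $|\Delta W|$ is $O(\omega^2)$, not $O(\omega)$. Your resolution (ii) is exactly what the paper's parenthetical ``indeed any function of $n$ that grows with $n$ in a sub-linear rate would work'' invites: take $\mu=\log n$, so that $\omega^2=O(\log^2 n)$, and Wormald's theorem applies directly.

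Your resolution (i), however, does not go through as stated with $\mu=\sqrt n$. Your double-counting gives per-step conditional variance $O(\omega^2)$, hence total variance $\sigma^2=O(\omega^2 n)=O(n^2)$ over $cn$ steps; with increment bound $M=O(\omega^2)=O(n)$, Freedman's inequality only controls deviations of order $\sqrt{\sigma^2}=\Theta(n)$, which is not $o(n)$. So (i) by itself is insufficient, but since (ii) already closes the argument this is moot.
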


By applying Lemma \ref{lem:differential_equation_application},
we are guaranteed the existence of a function $\eps = \eps(n) = o(1)$,
such that \emph{a.a.s.}\ for all $t \ge 0$,
\begin{itemize}
    \item $X(t)/n \le 1 - \exp(-t/n) + \eps$,
    \item $U(t)/n \ge \left( \frac{t}{n} -1\right) \exp(-t/n) + \exp(-2t/n) - \eps$,
    \item $W(t)/n \le \left( \frac{1}{2} \exp(-t/n) - \frac{1}{2} \exp(-3t/n) - \frac{t}{n} \exp(-2 t/n) \right) + \eps$.
\end{itemize}

Let us now assume that $\alpha \ge 0$ is the infimum over
those $b \ge 0$ such that
$g(b) \ge 1/2$,
where $g(b)$ is defined in~(\ref{eqn:first_bound}).
Now, $c < \alpha$ and
$T(n):= \lfloor c n \rfloor$ satisfies $T = (1-o(1))c n$. Thus,
\[
    \frac{X(T)}{n} + \frac{W(T)}{n} - \frac{U(T)}{n} \le  g(c) + 3 \eps.
\]
Moreover, $g(c) <1/2$ as $c < \alpha$, and so since $\eps,\mu^{-1} = o(1)$,
it follows that \emph{a.a.s.}\ 
\[
    X(T) + W(T) - U(T) < n/2 - 3T/\mu.
\]
Thus, $G_T$ does not have an $(\omega,\mu^{-1})$-approximate perfect matching \emph{a.a.s.}\ by Proposition \ref{prop:first_improvement}. As $c < \alpha$ was arbitrary, the proof of Theorem \ref{thm:main_lower_bound} is complete after applying Lemma \ref{lem:light_matching}. \qed

\section{Conclusion}

We have reduced the gap between the previous best upper and lower bounds on $\tau_{{\tt PM}}$ by roughly a factor of four. That being said,
we do not believe that any of our new bounds are tight. For instance, in the case of our upper bound, our strategy does not
make use of subsequent squares which land on a green vertex. One way to improve the algorithm's performance would be to control these
additional squares. The other way is to consider longer augmenting paths. However, there are not many saturated vertices with more than one square and thus the numerical improvement is not so significant. Tracing longer augmenting paths is harder to analyse and we did not attempt this. However, algorithmic simulations\footnote{We did not simulate a longer augmenting path algorithm ourselves, however we thank an anonymous reviewer for informing us that their simulation suggests an upper bound of $1.09$.} suggest that this approach will lead to an improved upper bound. In the case of our lower bound, we have identified certain subgraphs which  preclude the strategy from building a perfect matching too quickly, and which no strategy can avoid creating. To improve our lower bound,
one could track more complicated subgraphs which also prevent a perfect matching from being created too quickly. For instance, one could track
the number of vertices which are covered by two squares, and whose neighbours are themselves \textit{not} redundant. However, such subgraphs
can be avoided by the strategy, and so they are harder to control. More importantly, these subgraphs are much rarer
than redundant vertices, and so their contribution to the improvement of the lower bound is rather small.

We believe it is an interesting open question to resolve the remaining gap between our bounds on
$\tau_{{\tt PM}}$. More generally, we believe it is an interesting direction to better understand the value of $\tau_{{\tt H}}$,
when $H$ is a graph with absolutely bounded maximum degree $\Delta$ which spans all the vertices in $[n]$ . For instance, improving
upon the upper bound of $3/2( \Delta + o(\Delta))$ of~\cite{beneliezer2020fast} remains an intriguing open question
when $\Delta$ is small.







\bibliographystyle{plain}

\bibliography{refs.bib}


\end{document}